\documentclass[11pt,reqno]{amsproc}
\usepackage[left=0.9in,top=0.9in,right=0.9in,bottom=0.9in]{geometry}
\usepackage{bbm}



\usepackage{amsfonts,amsmath,amsthm,amssymb,latexsym,mathrsfs,stmaryrd}
\usepackage{hyperref}
\usepackage{mathtools}
\usepackage{graphicx}
\usepackage{subcaption}

\usepackage{tikz}\usetikzlibrary{cd,fit,matrix,arrows,decorations.pathmorphing}
\tikzset{commutative diagrams/.cd}


\numberwithin{equation}{section}

\newtheorem{theorem}{Theorem}[section]
\newtheorem{corollary}[theorem]{Corollary}
\newtheorem{lemma}[theorem]{Lemma}
\newtheorem{proposition}[theorem]{Proposition}

\newtheorem{thmab}{Theorem}

\theoremstyle{definition}
\newtheorem{definition}[theorem]{Definition}
\newtheorem{definition-theorem}[theorem]{Definition-Theorem}
\newtheorem{definition-lemma}[theorem]{Definition-Lemma}

\newtheorem{notation}[theorem]{Notation}

\newtheorem{question}[theorem]{Question}

\newtheorem{remark}[theorem]{Remark}

\theoremstyle{remark}

\newtheorem*{remark*}{Remark}


\usepackage{enumitem}
\setenumerate[1]{label=\textup{(\alph*)}}
\setenumerate[2]{label=\textup{(\roman*)}}


\newcommand\Z{{\mathbb Z}}
\newcommand\N{{\mathbb N}}
\newcommand\Q{{\mathbb Q}}
\newcommand\C{{\mathbb C}}

\DeclareMathOperator{\im}{im}

\DeclareMathOperator{\codim}{codim}
\DeclareMathOperator{\Hom}{Hom}

\newcommand\subeq{\subseteq}

 
\newcommand\onto{\twoheadrightarrow}
\newcommand\incl{\hookrightarrow}
\newcommand{\map}[1][]{{\xrightarrow{#1}}} 


\DeclarePairedDelimiter{\abs}{\lvert}{\rvert}

\DeclarePairedDelimiter{\set}{\{}{\}}
\DeclarePairedDelimiter{\pairing}{\langle}{\rangle}
\DeclarePairedDelimiter{\parens}{\lparen}{\rparen}


\DeclarePairedDelimiter\floor{\lfloor}{\rfloor}


\newcommand{\defn}{\textbf}

\newcommand{\YH}[1]{{{\color{blue}{#1}}}}

\DeclareMathOperator{\Conf}{Conf}
\DeclareMathOperator{\PConf}{PConf}

\newcommand{\spp}[1]{^{(#1)}}

\setlength{\parindent}{0pt}
\setlength{\parskip}{2em}

\begin{document}

\title{Hilbert Series for Configuration Spaces of Punctured Surfaces}
\author{Yifeng Huang$^\dagger$}
\thanks{$^\dagger$University of Southern California, yifeng.huang\text{@}usc.edu}
\author{Eric Ramos$^{\dagger\dagger}$}
\thanks{$^{\dagger\dagger}$Stevens Institute of Technology, eramos3\text{@}stevens.edu}
\date{}
\begin{abstract}
Let $\Sigma_{g,r}$ denote the $r$-punctured closed Riemann surface of genus $g$. For every $g\geq 0$, we determine the four-variable generating function for the mixed Hodge numbers of the unordered configuration spaces of $\Sigma_{g,1}$. The cases where $g\geq 2$ are new. Combining a result of \cite{huang2020cohomology}, this determines the analogous generating function for $\Sigma_{g,r}$ for all $r\geq 1$. As an application of our formula we illustrate how classical homological stability results, as well as so-called secondary stability results of \cite{miller2019higher} can be interpolated to illustrate stable behaviors in the mixed Hodge numbers of these spaces which have been thus-far undiscovered.
\end{abstract}
\maketitle

\section{Introduction}
For $n\geq 0$ and any smooth connected complex variety $\mathcal{M}$, let
\begin{equation}
\PConf_n(\mathcal{M}):=\set{(x_1,\dots,x_n)\in \mathcal{M}^n: x_i\neq x_j}
\end{equation}
be the ordered configuration space of $\mathcal{M}$, and
\begin{equation}
\Conf_n(\mathcal{M}):=\PConf_n(\mathcal{M})/S_n
\end{equation}
be the unordered configuration space of $\mathcal{M}$. As our primary focus in this work will be on the unordered configuration spaces, we will often just say configuration space to indicate unordered configuration space.

For $p,q,i\geq 0$, define the mixed Hodge number $h^{p,q;i}(\mathcal{M})$ as the dimension of the $(p,q)$-part of the mixed Hodge structure of $H^i(\mathcal{M};\Q)$. Our primary concern in this work will be with the generating function, 
\begin{equation}
f_\mathcal{M}(x,y,u,t):=\sum_{p,q,i,n\geq 0} (-1)^i h^{p,q;i}(\Conf_n(\mathcal{M}))\, x^p y^q u^i t^n,
\end{equation}
that records (up to sign) all mixed Hodge numbers of all configuration spaces of $\mathcal{M}$.

Generating functions similar to $f_\mathcal{M}$ have already seen a tremendous amount of attention in works such as \cite{cheonghuang2022betti,drummondcoleknudsen,vakilwood2015discriminants}, for instance. In keeping with these preceding works, a few words may be necessary to justify the decision to use a signed Hilbert series. Firstly, one should note that nothing is actually lost by making this choice since $\sum_{p,q,i,n\geq 0} h^{p,q;i}(\Conf_n(X))\, x^p y^q u^i t^n = f_X(x,y,-u,t)$. As for why we have chosen to do this, there is quite a bit of precedent going back to the seminal work of Macdonald \cite{macdonald1962poincare} and later work of Cheah \cite{Ch}. All of these previous works contribute to the following natural question, which we aim to resolve in part during the course of the present paper.

\begin{question}\label{mainQuestion}
Specialize $\mathcal{M}$ to be a smooth connected complex variety. Is $f_\mathcal{M}(x,y,u,t)$ always rational? If yes, what can we say about its denominator in general? For specific examples of $\mathcal{M}$, can we find an exact formula?
\end{question}

We note that Question \ref{mainQuestion} can be easily answered for $f_\mathcal{M}(x,y,1,t)$ because of the motivicity of the $E$-polynomial and a result of Vakil and Wood \cite[Proposition 5.9]{vakilwood2015discriminants} paired with some results of \cite{macdonald1962poincare}. On the other hand, Question \ref{mainQuestion} for $f_\mathcal{M}(1,1,u,t)$ concerns the rationality of the generating function for the Betti numbers of the configuration spaces of $\mathcal{M}$, which can be checked for any Riemann surface using \cite{drummondcoleknudsen}. The rationality question for $f_\mathcal{M}(1,1,u,t)$ for manifolds more general than Riemann surfaces remains open. Loosening our requirement that $\mathcal{M}$ be a manifold, and considering the (bivariate) generating function encoding the Betti numbers with varying homological index and number of points, it follows from \cite{ramos2018stability,an2019subdivisional} that the associated generating function is rational whenever $\mathcal{M}$ is a one-dimensional CW-complex (i.e. a graph).

We investigate $f_\mathcal{M}(x,y,u,t)$ as a common refinement of the extensively studied $f_\mathcal{M}(1,1,u,t)$ and the ``easy'' $f_\mathcal{M}(x,y,1,t)$. The primary result of this paper (Theorem \ref{thm:main}, below) will answer all of the above in the affirmative in the cases where $\mathcal{M}$ is a multi-punctured Riemann surface.

For $g\geq0 ,r\geq 1$, we denote by $X = \Sigma_{g,r}$ the $r$-punctured closed Riemann surface of genus $g$, equipped with any structure as a complex variety. We provide a clean explicit rational formula for $f_X(x,y,u,t)$. The case of $r=0$ is fundamentally different, and is treated by Pagaria in \cite{pagaria2020cohomology}. It is notable that our methods cannot be used to derive a closed form for the Hilbert Series in this case. Pagaria provides formulas \cite[Section 3]{pagaria2020cohomology}, which when pieced together form a rational expression for $f_X(x,x,u,t)$, though the resulting expression is not in any obvious way related to the ones given here for the $r \geq 1$ case.

\begin{thmab}
For $g\geq 0$ and $r\geq 1$, we have
\begin{equation}
f_{X}(x,y,u,t)=\frac{1}{(1+xyut)^{r-1}} \frac{\Phi_g\set{(1-xyz^2)(1-xz)^g(1-yz)^g}}{(1-t)(1-x^2 yu^2t^2)^g(1-xy^2u^2t^2)^g}, \label{mainThmFormula}
\end{equation}
where $\Phi_g$ is any $\Z[x,y]$-linear map satisfying
\begin{equation}
\Phi_g(z^j)=\begin{cases}
u^j t^j,& 0\leq j\leq g;\\
u^{j-1}t^j,& g+2\leq j\leq 2g+2.
\end{cases}
\end{equation}
\label{thm:main}
\end{thmab}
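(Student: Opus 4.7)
The factor $(1+xyut)^{-(r-1)}$ in \eqref{mainThmFormula} is consistent with the principle that removing a point from a smooth connected complex curve multiplies the mixed Hodge generating function by $(1+xyut)^{-1}$. This is precisely the kind of statement that the introduction attributes to \cite{huang2020cohomology}, so the plan is to invoke that result $r-1$ times, puncturing $\Sigma_{g,1}$ step by step, and thereby reduce the theorem to the essential case $r=1$.

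\textbf{Computing $f_{\Sigma_{g,1}}$.} For the $r=1$ case, I would adopt the philosophy of \cite{drummondcoleknudsen} and work with a small chain model of $C_\ast(\Conf_n(\Sigma_{g,1});\Q)$ whose input data is the mixed Hodge structure on $H^\ast(\Sigma_{g,1};\Q)$. That input is concentrated in degrees $0$ and $1$: one weight-$0$ class in degree $0$, and $2g$ weight-$1$ classes in degree $1$, consisting of $g$ classes of Hodge type $(1,0)$ and $g$ of Hodge type $(0,1)$. The differentials and grading shifts in such a model respect the mixed Hodge bigrading, so that after assembling signed Hodge numbers into a generating function in $x,y,u,t$ and summing over $n$ and $i$, one obtains an explicit rational expression whose denominator factors are naturally in bijection with the cohomology generators.

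\textbf{Matching the closed form.} The predicted denominator factors are $(1-t)$ (from the weight-$0$ generator), $(1-x^2yu^2t^2)^g$ (from the $g$ Hodge type $(1,0)$ generators paired with themselves), and $(1-xy^2u^2t^2)^g$ (from the $g$ Hodge type $(0,1)$ generators paired with themselves); here the $u^2t^2$ reflects two points in cohomological degree $1$, and $x^2y$ versus $xy^2$ reflects the Hodge bigrading of such a pair. The numerator must then be a polynomial in $x,y,u,t$, and the appearance of $\Phi_g$ suggests introducing an auxiliary formal variable $z$ that tracks a natural internal grading, allowing the $g$-dependence of the numerator to be packaged as $\Phi_g\{(1-xyz^2)(1-xz)^g(1-yz)^g\}$. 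For $\Phi_g$ to be well-defined on this polynomial, its coefficient of $z^{g+1}$ must vanish; I would establish this by the direct symmetry $z \mapsto 1/(xyz)$, under which $(1-xyz^2)(1-xz)^g(1-yz)^g$ transforms to $-(xy)^{g+1} z^{-(2g+2)}$ times itself, an algebraic reflection of Poincar\'e duality on the closed surface $\Sigma_g$.

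\textbf{Main obstacle.} The main hurdle I anticipate is the final combinatorial simplification: even after identifying the right chain model, expressing the generating series in the compact form \eqref{mainThmFormula} requires explaining in particular the discontinuous rule for $\Phi_g$ across the ``gap'' at $j=g+1$, namely why the exponent of $u$ drops by one past that threshold. A plausible alternative route would be to start from Pagaria's closed-surface formulas in \cite{pagaria2020cohomology} and extract $f_{\Sigma_{g,1}}$ from the point-forgetting fibration $\Conf_{n+1}(\Sigma_g)\to \Sigma_g$, whose fiber over a point is $\Conf_n(\Sigma_{g,1})$, by running a Leray spectral sequence in the category of mixed Hodge structures; but the remark in the introduction that Pagaria's $r=0$ expressions are not obviously compatible with \eqref{mainThmFormula} suggests that this route is strictly harder than the chain-complex one.
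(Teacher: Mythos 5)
Your reduction to $r=1$ via \cite{huang2020cohomology} is exactly what the paper does, and your symmetry $z\mapsto 1/(xyz)$ is a legitimate alternative to the paper's log-concavity argument for showing that the $z^{g+1}$-coefficient of $(1-xyz^2)(1-xz)^g(1-yz)^g$ vanishes, so that $\Phi_g$ is well defined on the numerator. But the core of the theorem is missing. You propose to ``adopt the philosophy of \cite{drummondcoleknudsen}'' with a small chain model, assert without justification that its differentials respect the mixed Hodge bigrading, and then read off the answer by pattern-matching against the target formula. That middle assertion is one of the delicate points: the Drummond-Cole--Knudsen model computes Betti numbers, and upgrading any such model to one that computes mixed Hodge numbers is precisely where work is needed. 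The paper instead uses the Totaro spectral sequence for $\PConf_n(X)$, whose identification $H^*(\Conf_n(X))\cong E_3(X,n)^{S_n}$ as mixed Hodge structures depends crucially on the purity of $H^i(\Sigma_{g,1})$ (and fails for $r\geq 2$, which is exactly why the reduction to $r=1$ matters). Your heuristic for the denominator is also off in detail: the factor $1-x^2yu^2t^2$ does not come from a type-$(1,0)$ class ``paired with itself'' (such products $e_2(\alpha_i\alpha_j)$ vanish in the $S_n$-invariants by odd-degree commutativity); it comes from the even-degree classes $g_{ij}x^{(a)}_i$ of Hodge type $(2,1)$, whose $(1,1)$-component is contributed by $g_{ij}$.

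Most importantly, you explicitly concede that you cannot explain the discontinuity in $\Phi_g$ at $j=g+1$, and that is not a finishing touch but the actual content of the proof. In the paper, after establishing that the distinguished generators $f_{\mathbf{t}}$ form a basis of $E_2(X,n)^{S_n}$, the differential restricted to the invariants is shown to have the same matrix coefficients as the Lefschetz operator $\omega\wedge\colon\bigwedge^i\Q^{2g}\to\bigwedge^{i+2}\Q^{2g}$. Hard Lefschetz then says this map is injective for $i\leq g-1$ and surjective for $i\geq g-1$, so the surviving classes split into a cokernel part (total degree $\leq g$, with $u$- and $t$-exponents equal) and a kernel part (total degree $\geq g+1$, carrying one factor $g_{ij}$ of bidegree $(0,1)$, hence one cohomological degree for two points, so $t$-exponent exceeding $u$-exponent by one). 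That bidegree asymmetry of $g_{ij}$ in the surviving upper range is exactly the drop $u^j\rightsquigarrow u^{j-1}$ in $\Phi_g$. Without identifying this mechanism, or some substitute for it, the proposal does not prove the formula; your suggested fallback through Pagaria's $r=0$ computation and a Leray spectral sequence is, as you note, even less tractable.
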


\begin{remark} \label{rmk:positiveNum}
    For $p+q=n$, the coefficient of $x^py^qz^n$ in $(1-xyz^2)(1-xz)^g(1-yz)^g$ is given precisely by
    \begin{align*}
        (-1)^n \parens*{\binom{g}{p}\binom{g}{q} -\binom{g}{p-1}\binom{g}{q-1}}.
    \end{align*}
    By the log-concavity of the binomial coefficeint sequence $\set{\binom{g}{j}}_j$ and the property that $\binom{g}{j}=\binom{g}{g-j}$, we have
    \begin{equation*}
        \mathrm{sgn}\parens*{\binom{g}{p}\binom{g}{q} -\binom{g}{p-1}\binom{g}{q-1}} = \begin{cases}
            +,& 0\leq p+q\leq g,\\
            0,& p+q=g+1,\\
            -,& g+1\leq p+q\leq 2g+2.
        \end{cases}
    \end{equation*}
    
    As a result, the $x^p y^q z^{g+1}$-coefficient of $(1-xyz^2)(1-xz)^g(1-yz)^g$ vanishes, so $\Phi_g(z^{g+1})$ need not be specified. Furthermore, $\Phi_g\set{(1-xyz^2)(1-xz)^g(1-yz)^g}$ has nonnegative coefficients in $x,y,-u,t$, so the coefficients of $f_X(x,y,-u,t)$ are nonnegative. While this positivity of $f_X(x,y,-u,t)$ is guaranteed by the fact that $h^{p,q;i}(\Conf_n(X))\geq 0$, the positivity of its \emph{numerator} $\Phi_g\set{(1-xyz^2)(1-xz)^g(1-yz)^g}$ is a stronger statement; in other words, \eqref{mainThmFormula} gives a cancellation-free formula to compute $h^{p,q;i}(\Conf_n(X))$.\\

    Relevant to this discussion, the aformentioned rational formula in the $r = 0$ case given by Pagaria \cite{pagaria2020cohomology} also has a manifestly positive numerator. It is the belief of the authors that this suggests a deeper topological or representation theoretic structure interacting with the mixed-Hodge numbers, such as the action of the mapping class group of $X$ \cite{bianchi2022mapping}.
\end{remark}

\begin{remark}
In light of \cite{huang2020cohomology}, it suffices to prove Theorem \ref{thm:main} for $r=1$. Indeed, in that work it is shown that the Hilbert Series in the $r$ and $r+1$ cases differ from one another precisely by a factor of $(1+xyut)$ in their respective denominators, so long as $r \geq 1$. For this reason, we will exclusively be assuming that $X = \Sigma_{g,1}$ is the once punctured Riemann surface of genus $g \geq 0$.

It should also be remarked that Theorem \ref{thm:main} in the case of $g = 1$ was shown in \cite{cheonghuang2022betti}. It is that work that originally noted that the formula \eqref{mainThmFormula} should hold in higher genus as well.
\end{remark}

\begin{remark}
The equation (\ref{mainThmFormula}) can also give one a sense of how things look when we also allow the parameters $g$ and $r$ to vary. For instance, if we were to expand $f_X(x,y,u,t)$ to a generating function in five variables by accounting for the parameter $r$, then the above immediately implies that this generating function would also be rational. The behavior in varying $g$ is a bit more obfuscated by the somewhat mysterious $\Phi_g$ function in the numerator.

The study of regular behaviors in configuration spaces that manifest when one varies not only the number of points being configured but also the underlying space, has become very common in recent years, though largely in the context of configuration spaces of graphs. See \cite{lutgehetmann2017representation,miyata2023graph,ramos2020application,knudsen2024robertson} for a small sampling of these kinds of results. Notably, in those contexts, it has been observed that the corresponding generating functions encoding Betti numbers with varying underlying graphs can be rational, combining \cite{ramos2020application} with \cite{nagel2021rationality}, or even algebraic \cite{ramos2021hilbert}. It would be interesting to see more results on generating functions associated with configuration spaces which account not only for the homological index and the number of points, but also variation in the base space.
\end{remark}

As an immediate application, substituting $x=y=1$, Theorem \ref{thm:main} provides a clean formula for the generating function for the Betti numbers up to sign. These Betti numbers were originally calculated in \cite{drummondcoleknudsen}. The necessity of the degree shift $\Phi_g$ to convert the numerator into a factored form is probably one reason why a simple formula for the numerator was not known before. 

As another application, setting $x,y,$ and $u$ all equal to 1 turns our main theorem into a statement about the generating function for the Euler characteristic of these spaces. Generating functions of this sort are the primary consideration of the famous theorem of Gal \cite{Gal}, which provided a beautifully clean rational description of the generating function for the Euler characteristic
\[
\sum_{n \geq 0} \chi(\Conf_n(S))t^n,
\]
whenever $S$ is a simplicial complex. The main result of this paper therefore recovers and expands on \cite{Gal} in the specific case of configuration spaces of punctured surfaces.

Perhaps the most mysterious seeming part of the formula in Theorem \ref{thm:main} is the combinatorial shifting operator $\Phi_g$. Originally, the necessary presence of such a shift was observed in an entirely ``brute force" fashion by the first author while he studied the Betti numbers found in \cite{drummondcoleknudsen}. Having proven Theorem \ref{thm:main}, however, we are now able to give a better explanation in terms of the geometry of the situation. As we will see in the proof of Proposition \ref{FinishingTouch}, the shift is a direct manifestation of the hard Lefschetz property of the operator $\omega\wedge:\bigwedge^*V\to \bigwedge^*V$, where $V=\Q^{2g}$ with basis $x_1,y_1,\dots,x_g,y_g$, and 
\begin{equation}
    \omega=x_1\wedge y_1+\dots+x_g\wedge y_g.
\end{equation}
(See Lemma~\ref{lem:hard-lefschetz}.) The reason why this operator shows up is because an isomorphic copy of it appears in the explicit description of the differential map of our spectral sequence, as explained in Section \ref{sec:mainProof}

To highlight the difficulty of Theorem \ref{thm:main}, we note that one of its consequences is that the mixed Hodge structure for $H^i(\Conf_n(\Sigma_{g,r}))$ is not pure for certain $i,n$ if $g,r > 1$. This is in contrast to the $g=r=1$ case, where one has purity of weight $\floor{3i/2}$ due to \cite{cheonghuang2022betti}. This purity was crucial in the determination of the $g=r=1$ case in \cite{cheonghuang2022betti}.

To illustrate the power of Theorem \ref{thm:main}, one can consider its implications in the asymptotic behavior in the parameters $i$ and $n$ of the Betti numbers, or mixed-Hodge numbers more generally. Classically speaking, homological stability in configuration spaces of manifolds considered the stable behavior of Betti numbers when $i$ was fixed and $n$ allowed to vary (See \cite{segal1973configuration}\cite{mcduff1975configuration}\cite{church2012homological} and the references therein). More recently it has been noted that regular behaviors in the Betti numbers also appear in ordered configuration spaces of orientable non-compact manifolds when one travels along any line of slope $\frac{1}{2}$, $i = \frac{1}{2}n$ \cite{miller2019higher}. Our Theorem \ref{thm:main} can be viewed as providing a stability theorem that is all encompassing, at least in the case of punctured Reimann surfaces. In other words, it's a version of stability for the Betti numbers that does not require one to look along linear slices of $i$ and $n$. Conversely, our main theorem implies the presence of ``good behavior" of the Betti numbers along any (quasi) linear relationship between $i$ and $n$. We will show the following during the course of this work as a direct corollary to our main theorem.

\begin{corollary}\label{cor:stability}
For $X=\Sigma_{g,r},$ $g\geq 0,$ $r\geq 1,$ we have
    \begin{enumerate}
        \item\label{stability} for all $p,q,i,n$, $h^{p,q;i}(\Conf_n(X))=0$ whenever $n<i$ and $h^{p,q;i}(\Conf_n(X))=h^{p,q;i}(\Conf_{n+1}(X))$ whenever $n \geq i+1$.
        \item\label{n=i} 
        \[ \sum_{p,q,i} (-1)^i h^{p,q;i}(\Conf_i(X)) x^p y^q z^i = \frac{T_{\leq g}\{(1 - xyz^2)(1 - xz)^g(1 - yz)^g\}}{(1+xyz)^{r-1}(1-x^2yz^2)^g(1-xy^2 z^2)^g},\]
        where the operator $T_{\leq g}$ means keeping only the terms with $z$-degree at most $g$.
        \item\label{n=i+1} \[ \sum_{p,q,i} (-1)^i h^{p,q;i}(\Conf_{i+1}(X)) x^p y^q z^i = \frac{\Psi_g\{(1 - xyz^2)(1 - xz)^g(1 - yz)^g\}}{(1+xyz)^{r-1}(1-x^2yz^2)^g(1-xy^2 z^2)^g},\]
        where $\Psi_g$ is any $\Q[x,y]$-linear map satisfying 
        \begin{equation}
        \Psi_g(z^j)=\begin{cases}
        z^j,& 0\leq j\leq g;\\
        z^{j-1},& g+2\leq j\leq 2g+2.
        \end{cases}
        \end{equation}
    \end{enumerate}
\end{corollary}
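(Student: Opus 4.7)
The plan is to deduce all three parts from Theorem~\ref{thm:main} by tracking the ``balance'' $i - n$ of each monomial $x^p y^q u^i t^n$ in the expansion of $f_X$. First I would observe that in \eqref{mainThmFormula} every factor, viewed as a formal power series in $u,t$, produces only monomials of balance $\leq 0$: the factors $(1+xyut)^{-(r-1)}$, $(1-x^2yu^2t^2)^{-g}$ and $(1-xy^2u^2t^2)^{-g}$ contribute only balance-$0$ monomials; the factor $(1-t)^{-1}$ contributes balance-$(-k)$ monomials; and the numerator $\Phi_g\set{(1-xyz^2)(1-xz)^g(1-yz)^g}$ decomposes as a balance-$0$ piece (from $\Phi_g(z^j)=u^jt^j$ for $0\le j\le g$) plus a balance-$(-1)$ piece (from $\Phi_g(z^j)=u^{j-1}t^j$ for $g+2\le j\le 2g+2$). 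This immediately yields the vanishing $h^{p,q;i}(\Conf_n(X))=0$ for $n<i$ in part~\ref{stability}.

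For the stability half of part~\ref{stability}, I would multiply by $(1-t)$, removing the only denominator factor capable of producing balance strictly below~$0$. In $(1-t)f_X$ the total balance is then controlled by the numerator alone and lies in $\set{0,-1}$, so $[x^py^qu^it^n](1-t)f_X=0$ for $n\ge i+2$; equivalently, $h^{p,q;i}(\Conf_n(X))$ is constant in $n$ once $n\ge i+1$.

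For parts~\ref{n=i} and~\ref{n=i+1} I would then carry out the substitution $u\mapsto z/t$ in \eqref{mainThmFormula}. The balance bound guarantees that $F(x,y,z,t):=f_X(x,y,z/t,t)$ is a well-defined formal power series in $t$, and by construction
\[ [t^m]F(x,y,z,t)\;=\;\sum_{p,q,i}(-1)^i h^{p,q;i}(\Conf_{i+m}(X))\,x^p y^q z^i. \]
Under the substitution, the denominator of \eqref{mainThmFormula} becomes $D\cdot(1-t)$ with $D:=(1+xyz)^{r-1}(1-x^2yz^2)^g(1-xy^2z^2)^g$, while the numerator becomes $A+tB$ where
\[ A = T_{\le g}\set{(1-xyz^2)(1-xz)^g(1-yz)^g}, \qquad B = z^{-1}\,T_{\ge g+2}\set{(1-xyz^2)(1-xz)^g(1-yz)^g}, \]
using $(z/t)^j t^j=z^j$ for $j\le g$ and $(z/t)^{j-1}t^j=z^{j-1}t$ for $j\ge g+2$. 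Expanding $(1-t)^{-1}=1+t+t^2+\cdots$ then gives $[t^0]F = A/D$, which is part~\ref{n=i}, and $[t^1]F = (A+B)/D$; since $A+B = \Psi_g\set{(1-xyz^2)(1-xz)^g(1-yz)^g}$ directly from the definition of $\Psi_g$, this is part~\ref{n=i+1}.

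I do not anticipate a serious obstacle: once the right bookkeeping device (the balance $i-n$, or equivalently the change of variable $u=z/t$) is introduced, the entire corollary is a formal extraction of diagonals from \eqref{mainThmFormula}. The only mildly delicate point will be confirming that the substitution $u\mapsto z/t$ produces no negative powers of $t$, which is precisely the content of part~\ref{stability}; as a consistency check, the same computation yields $[t^m]F=(A+B)/D$ for every $m\ge 1$, independently reproving the stability assertion.
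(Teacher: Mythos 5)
Your proposal is correct and is essentially the paper's own argument: the paper's decomposition of the numerator into $\sum_i a_i u^i t^i + b_i u^i t^{i+1}$ and of the full quotient into $\sum_i c_i u^i t^i + d_i u^i t^{i+1}$ is exactly your observation that $(1-t)f_X$ has balance $i-n\in\set{0,-1}$, and your substitution $u\mapsto z/t$ is just a tidy repackaging of the paper's diagonal coefficient extraction $[u^i t^{i+m}]$. The only cosmetic difference is that the change of variables automates the bookkeeping that the paper does by hand with the sequences $a_i,b_i,c_i,d_i$.
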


Breaking Corollary \ref{cor:stability} into its parts, \ref{stability} tells us that the mixed Hodge numbers of $\Conf_n(X)$ vanish above the line $i = n$. While this fact was already known (see \cite{miller2019higher}, for instance) our generating function explicitly observes the behavior by consequence of the fact that the denominator of (\ref{mainThmFormula}) is a univariate polynomial with coefficieints in $\Z[x,y]$ in the variable $ut$. Part~\ref{stability} also tells us that homological stability in homological index $i$ starts precisely at $n = i+1$. This is a vast improvement on the representation stable range for the spaces $\PConf_n(X)$, which is known to start at $n = 2i$.

The later two parts of Corollary \ref{cor:stability} tell us that if one were to look along either of the two linear strands $i = n$ or $i = n+1$, the generating function for the mixed Hodge numbers remains rational. Note that while it is always the case that rational multi-variate generating functions produce \emph{algebraic} generating functions along linear strands \cite[Chapter 6]{stanley1990enumerative}, our generating function (\ref{mainThmFormula}) is special in that these associated generating functions are rational.

One also sees that applying \ref{stability} together with \ref{n=i} and \ref{n=i+1}, one obtains rational formulas for the generating functions of the mixed Hodge numbers along any linear strand $i = k_1m, n = k_2m$.

Finally, combining \ref{stability} and \ref{n=i+1} also computes the stable mixed Hodge numbers $h^{p,q;i}(\Conf_\infty(X))$. We will prove Corollary \ref{cor:stability} in Section \ref{corProof}.

It is a simple algebraic computation to apply Corollary \ref{cor:stability} to recover the explicit formulas for the Betti numbers of configurations of punctured surfaces given by Drummond-Cole and Knudsen in \cite[Proposition 3.5]{drummondcoleknudsen}. We note that Corollary \ref{cor:stability} specialized to the Betti numbers by setting $x = y = 1$ and replacing $z$ with $-z$ tells us more than just a formula for these numbers. Indeed, as noted in Remark \ref{rmk:positiveNum}, it also reveals that the rational generating function for the Betti numbers has a numerator with positive coefficients!

To end this introduction, we take a moment to say a few words about what might be going on in the case of $\PConf_n(X)$. Much of the modern literature on ordered configuration spaces of manifolds and other spaces has put considerable stress on the fact that one must incorporate the natural actions of the symmetric groups into any computation one may perform on their cohomology groups. The language of Representation Stability (see \cite{church2013representation,church2015fi,miller2019higher} for a small sampling) was essentially designed to contend with the kinds of stable behaviors that one finds when studying the representations which appear in these cohomology groups. Parallel to these developments, there has also been considerable work trying to understand equivariant generalizations of Hilbert series in a number of circumstances wherein one has a family of spaces each carrying some kind of naturally compatible symmetry (see \cite{stapledon2011equivariant,elia2024techniques}, for some instances of this). It is our current belief, due to the fact that mixed Hodge structures on these ordered configuration spaces fully respect all permutation actions as well as the morphisms which introduce (or forget) points into the configuration space, that one can define an equivariant version of $f_X$, which carries a similar rationality property as in our main theorem. We leave this question as a possible interesting follow-up to the present work.

\section*{Notation}

Here is a list of some general notation that will be used throughout this work. We collect these here for easy reference while reading.\\

\begin{itemize}
\item $[n]$ --- The set $\set{1,2,\dots,n}$.\\

\item $\Q \pairing{x_1,\dots,x_n}$ --- The $\Q$ vector space freely generated by $x_1,\dots,x_n$.\\

\item $\bigwedge[x_1,\dots,x_n]$, $\deg x_i=d_i$ --- free graded-commutative algebra over $\Q$ generated by homogeneous elements $x_i$ of degrees $d_i$. More precisely, if $\deg x_i$ are even and $\deg y_j$ are odd, then
\[ \bigwedge[x_1,\dots,x_n,y_1,\dots,y_m] = \Q[x_1,\dots,x_n]\otimes \bigwedge \Q\pairing{y_1,\dots,y_m},\]
with the latter factor referring to the exterior algebra of the vector space $\Q\pairing{y_1,\dots,y_m}$.\\

\item $X$ --- The once punctured smooth Riemann surface of genus $g\geq 0$, $\Sigma_{g,1}$.\\

\item $H^*(X)$ --- the rational cohomology ring of the space $X$. All (co)homology objects we study in this work will be assumed to be with rational coefficients.\\

\item $\alpha, \beta$ --- general elements of $H^1(X)$.\\

\item $\alpha_i, \beta_i$, where $i \in [n]$ --- the pullbacks of $\alpha$ and $\beta$, respectively, via the $i$-th projection $\pi_i: X^n\to X$ .\\

\item $h^{p,q;i}(X)$ --- the mixed Hodge numbers of $X$.\\

\item $\Conf_n(X)$ --- the $n$ point unordered configuration space of $X$. \\

\item $\PConf_n(X)$ --- the $n$ point ordered configuration space of $X$.\\

\item $f_X(x,y,u,t)$ --- the generating function $\sum_{p,q,i,n\geq 0} (-1)^i h^{p,q;i}(\Conf_n(X))\, x^p y^q u^i t^n$.\\

\item $g_{i,j}$ --- algebra generator for the 2nd page of the Totaro Spectral sequence computing the cohomology of $\PConf_n(X)$ (see \eqref{eq:E2Page})).\\

\item $I=(i_1,i_2,\dots,i_r)$ --- a tuple of distinct elements of $[n]$.\\

\item $g_I$ --- The product $g_{i_1i_2} g_{i_2i_3}\dots g_{i_{r-1}i_r}$.\\

\item $\alpha_I$ --- The product $g_{I}\alpha_{i_1} = g_{I}\alpha_{i_2} = \ldots = g_{I}\alpha_{i_r}$.\\

\end{itemize}

\section*{Acknowledgments}
ER was supported by DMS-2400460 and DMS-2137628. YH thanks Karthik Ganapathy for discussions leading to the formation of Corollary~\ref{cor:stability}.

\section{Computational background and setup}
\subsection{Computations in the cohomology of $\Sigma_{g,1}$}
Fix $g\geq 0$ and $X=\Sigma_{g,1}$. Then the cohomology ring of $X$ is given by
\[\
H^*(X)=\Q\pairing{1,x\spp{1},\dots,x\spp{g},y\spp{1},\dots,y\spp{g}},
\]
where $\deg x\spp{a}=\deg y\spp{a}=1$, $x\spp{a}$ has Hodge type $(1,0)$, and $y\spp{a}$ has Hodge type $(0,1)$ for each $1\leq a\leq g$.

For $n\geq 1$, the K\"unneth formula gives 
\[
H^*(X^n)=\bigwedge[x\spp{a}_i,y\spp{a}_i: 1\leq a\leq g, 1\leq i\leq n]/(\alpha_i\beta_i: \alpha,\beta\in H^1(X)),
\]
where for any $1\leq i\leq n$ and $\alpha\in H^1(X)$, we denote by $\alpha_i$ the pullback of $\alpha$ via the $i$-th projection $\pi_i: X^n\to X$. 

Let $\delta:X\to X^2$ be the diagonal map. We define the \textbf{diagonal class} $[\Delta]\in H^2(X^2)$ to be the image of $1$ under the Gysin map $\delta_*:H^0(X)\to H^2(X\times X)$, which we now take the time to explicitly construct.

Let $\Sigma=\Sigma_g$, and choose a volume form $\omega$ of $H^2(\Sigma)$. Let $x\spp{a}$ and $y\spp{a}$, where $1\leq a\leq g$, be chosen such that $x\spp{a}y\spp{a}=\omega$ and $x\spp{a}x\spp{b}=y\spp{a}y\spp{b}=x\spp{a}y\spp{b}=0$ in $H^2(\Sigma)$ for $a\neq b$. The compactly supported cohomology of $X$ is given by $H^0_c(X)=0$,  $H^1_c(X)=H^1(X)=H^1(\Sigma)$ and $H^2_c(X)=H^2(\Sigma)$. Under the above identification, the Poincar\'e duality pairing $H^1(X)\times H^1_c(X)\to H^2_c(X)$ is just the cup product $H^1(\Sigma)\times H^1(\Sigma)\to H^2(\Sigma)$. 

By K\"unneth's formula, the cohomology of $\Sigma^2$ is given by $H^*(\Sigma^2)=H^*(\Sigma)\otimes H^*(\Sigma)$. For $\gamma\in H^*(\Sigma)$, we denote $\gamma_1=\gamma\otimes 1$ and $\gamma_2=1\otimes \gamma$. There are natural maps $H^*_c(X^2)\incl H^*(\Sigma^2)\onto H^*(X^2)$. We have $H^4_c(X^2)\cong H^4(X^2)$, and let $\omega_1\omega_2$ be the volume form of $X^2$. As a vector subspace of $H^2(\Sigma^2)$, $H^2_c(X^2)$ is generated by $\alpha_1 \beta_2$, with $\alpha,\beta\in H^1(X)$. As a quotient space of $H^2(\Sigma^2)$, $H^2(X^2)$ is defined by the relation $\alpha_j \beta_j=0$ for $\alpha,\beta\in H^1(X)$ and $j=1,2$. The Poincar\'e pairing $H^2(X^2)\times H^2_c(X^2)\to H^4_c(X^2)$ descends from the usual cup product on $H^*(\Sigma^2)$; here, for the input in $H^2(X^2)$, we may choose an arbitrary lift in $H^2(\Sigma^2)$.

We are now ready to define the Gysin map. Denote by $\int$ the isomorphism that sends the volume form to $1$. In other words, $\int: H^2_c(X)\to \Q$ sends $\omega$ to $1$, and $\int:H^4_c(X^2)\to \Q$ sends $\omega_1\omega_2$ to $1$. Then the Gysin map $\delta_*:H^i(X)\to H^{i+2}(X^2)$ is uniquely characterized by
\begin{equation}
\int_X \epsilon \cdot \delta^*(\gamma) = \int_{X^2} \delta_*(\epsilon) \cdot \gamma, \text{ for all }\epsilon\in H^i(X)\text{, }\gamma\in H^{2-i}_c(X^2),
\end{equation}
where $\delta^*$ is the pullback $H^*_c(X^2)\to H^*_c(X)$ under the diagonal map, which is defined since $\delta$ is proper. We note that $\delta^*$ is induced from $H^*(X^2)\to H^*(X)$, which can be computed by $\delta^*(\gamma_j)=\gamma$ for all $\gamma\in H^*(X)$.

The above alternative construction for the Gysin map now gives us a means to define the diagonal class $[\Delta]$ by
\begin{equation}\label{eq:gysin}
\int_X \delta^*(\gamma) = \int_{X^2} [\Delta]\cdot \gamma, \text{ for all }\gamma\in H^2_c(X^2).
\end{equation}

Our next lemma uses the above characterization of the diagonal class to provide a slightly more useful expression for $[\Delta]$.

\begin{lemma}\label{lem:diagonal-class}
We have
\begin{equation}\label{eq:diagonal}
[\Delta]=-\sum_{a=1}^g \parens*{x\spp{a}_1 y\spp{a}_2+x\spp{a}_2 y\spp{a}_1}.
\end{equation}
\end{lemma}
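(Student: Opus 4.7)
The plan is to parametrize $[\Delta]\in H^2(X^2)$ in a convenient basis and then pin down the coefficients by testing the characterization \eqref{eq:gysin} against a spanning set of $H^2_c(X^2)$. Since $H^2(X)=0$ for the open surface $X$, the K\"unneth description of $H^*(X^2)$ together with the defining relations $\alpha_j\beta_j=0$ forces $H^2(X^2)$ to be spanned by the products $\alpha_1\beta_2$ with $\alpha,\beta\in H^1(X)$. I would therefore write
\begin{equation*}
[\Delta]=\sum_{a,b}\parens*{A^{xy}_{ab}\, x\spp{a}_1 y\spp{b}_2 +A^{yx}_{ab}\, y\spp{a}_1 x\spp{b}_2 +A^{xx}_{ab}\, x\spp{a}_1 x\spp{b}_2 +A^{yy}_{ab}\, y\spp{a}_1 y\spp{b}_2}
\end{equation*}
with undetermined scalar coefficients and then solve for them.

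To extract the coefficients, I would let $\gamma$ range over the spanning classes $\alpha'_1\beta'_2\in H^2_c(X^2)$ with $\alpha',\beta'\in H^1(X)=H^1_c(X)$. On the left of \eqref{eq:gysin}, the identity $\delta^*(\alpha'_1\beta'_2)=\alpha'\beta'\in H^2_c(X)=H^2(\Sigma)$ reduces $\int_X\delta^*(\alpha'_1\beta'_2)$ to $\int_\Sigma\alpha'\beta'$, which equals $+1$ when $(\alpha',\beta')=(x\spp{a},y\spp{a})$, equals $-1$ when $(\alpha',\beta')=(y\spp{a},x\spp{a})$, and vanishes in every other case (two $x$'s, two $y$'s, or mismatched indices). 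On the right, I compute the cup product $[\Delta]\cdot(\alpha'_1\beta'_2)$ in $H^4(\Sigma^2)$ by lifting to $H^*(\Sigma^2)$, using the graded K\"unneth rule $(\alpha\otimes\beta)\cdot(\gamma\otimes\delta)=(-1)^{|\beta||\gamma|}(\alpha\gamma)\otimes(\beta\delta)$, and recalling that the only nonzero degree-$2$ products among the $x\spp{a},y\spp{a}$ in $H^*(\Sigma)$ are $x\spp{a}y\spp{a}=\omega$ and $y\spp{a}x\spp{a}=-\omega$. Each resulting class is then integrated against $\omega_1\omega_2$.

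Equating the two sides for each choice of $(\alpha',\beta')$ gives a linear system that determines every coefficient. The $xx$ and $yy$ tests produce zero on both sides and force $A^{xx}_{ab}=A^{yy}_{ab}=0$; the $xy$ and $yx$ tests reduce to diagonal constraints and give $A^{xy}_{ab}=-\delta_{a,b}$ and $A^{yx}_{ab}=+\delta_{a,b}$, the overall $-1$ coming from a single Koszul commutation. Repackaging via $x\spp{a}_2 y\spp{a}_1=-y\spp{a}_1 x\spp{a}_2$ in $H^*(X^2)$ converts this into the manifestly symmetric expression \eqref{eq:diagonal}. The main bookkeeping hurdle will be keeping every Koszul sign straight, since the whole answer is $-1$ times a symmetric combination and a single misplaced sign would flip the statement.
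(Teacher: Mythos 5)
Your proposal is correct and follows essentially the same route as the paper: both pin down $[\Delta]$ by pairing against the spanning classes $\alpha'_1\beta'_2$ of $H^2_c(X^2)$ via the defining identity \eqref{eq:gysin}, with nondegeneracy of the Poincar\'e pairing guaranteeing uniqueness. The only cosmetic difference is that you set up an undetermined ansatz and solve the (diagonal) linear system, whereas the paper verifies that the stated formula satisfies \eqref{eq:gysin} on the same test classes; the sign bookkeeping you describe matches the paper's sample computations.
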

\begin{proof}

It suffices to verify \eqref{eq:gysin} for $\gamma=x\spp{a}_1 x\spp{b}_2, y\spp{a}_1 y\spp{b}_2$, $x\spp{a}_1 y\spp{b}_2$, and $y\spp{a}_1 x\spp{b}_2$, assuming that $[\Delta]$ is given by \eqref{eq:diagonal}.

We demonstrate two examples only, and the rest are analogous. For $\gamma=x\spp{a}_1 x\spp{b}_2$, the left-hand side of \eqref{eq:gysin} is $\int_X x\spp{a}x\spp{b}=0$, and the right-hand side is 0 because a typical term is of the form
\begin{equation}
\int_{X^2} -x\spp{c}_1 y\spp{c}_2 x\spp{a}_1 x\spp{b}_2,
\end{equation}
which is zero because $x\spp{c}x\spp{a}=0$. As another example, let $\gamma=x\spp{a}_1 y\spp{a}_2$. Then the left-hand side of \eqref{eq:gysin} is $\int_X x\spp{a}y\spp{a}=1$, and the only nonvanishing term of the right-hand side is
\begin{equation}
\begin{aligned}
&\int_{X^2} -x\spp{a}_2 y\spp{a}_1 x\spp{a}_1 y\spp{a}_2 \\
= & \int_{X^2} x\spp{a}_1 y\spp{a}_1 x\spp{a}_2 y\spp{a}_2 = \int_{X^2}\omega_1\omega_2=1. \qedhere
\end{aligned}
\end{equation}
\end{proof}

\subsection{Distinguished generators for the cohomology of configuration space}

We begin this section by recalling the spectral sequence computing the cohomology of $\PConf_n(X)$ due to Totaro \cite{totaro1996configuration}. For $n\geq 1$, we define a bigraded\footnote{For graded commutativity, the degree of a homogeneous element of bidegree $(p,q)$ is $p+q$.} commutative algebra $E_2(X,n)$ by
\begin{align}
E_2(X,n)=\bigwedge[g_{ij}, x\spp{a}_i, y\spp{a}_i]/{\sim}, \label{eq:E2Page}
\end{align}
where the generators $g_{ij}$ are indexed by \textbf{unordered} pairs $\set{i,j}\subeq [n]$. The Hodge types and bidegrees of the generators are given by the following table.
\begin{center}
\begin{tabular}{l|l|l}
 & bidegree & Hodge type \\
 \hline
 $g_{ij}$ & $(0,1)$ & $(1,1)$ \\
 $x\spp{a}_i$ & $(1,0)$ & $(1,0)$\\
 $y\spp{a}_i$ & $(1,0)$ & $(0,1)$
\end{tabular}
\end{center}
The relations are given by
\begin{align}
g_{ij}g_{jk}+g_{jk}g_{ki}+g_{ki}g_{ij}&=0,& \set{i,j,k}\subeq [n],\label{eq:rel1}\\
g_{ij}\alpha_i&=g_{ij}\alpha_j, & \alpha\in H^1(X),\label{eq:rel2}\\
\alpha_i\beta_i&=0, & \alpha,\beta\in H^1(X).\label{eq:rel3}
\end{align}

Define a differential graded algebra structure on $E_2(X,n)$ by
\begin{equation}\label{eq:differential}
d\alpha_i=0, \quad dg_{ij}:=[\Delta]=-\sum_{a=1}^g \parens*{x\spp{a}_1 y\spp{a}_2+x\spp{a}_2 y\spp{a}_1},
\end{equation}
and define the graded commutative algebra $E_3(X,n)$ by taking the cohomology:
\begin{equation}\label{eq:cohomology}
E_3^{p,q}(X,n):=\frac{\ker(d: E_2^{p,q}\to E_2^{p+2,q-1})}{\im(d: E_2^{p-2,q+1}\to E_2^{p,q})}.
\end{equation}

Let the symmetric group $S_n$ act on $E_2(X,n)$ by permuting the lower indices. Note that $d$ is $S_n$-equivariant. Since we are working over $\Q$, taking $S_n$-invariants $(\cdot)^{S_n}$ is an exact functor, so $E_3(X,n)^{S_n}$ is the cohomology of $E_2(X,n)^{S_n}$ with respect to $d$. Since the mixed Hodge structure of $H^i(X)$ is pure of weight $i$ for any $i\geq 0$,\footnote{Importantly, this fails for $\Sigma_{g,r}$ if $r\geq 2$.} the same argument of \cite[\S 3]{cheonghuang2022betti} implies that
\begin{equation}
H^*(\Conf_n(X))\cong E_3(X,n)^{S_n}
\end{equation}
in a way that preserves the mixed Hodge numbers. Therefore, our computation of $f_X(x,y,u,t)$ reduces to computing the four-variable Hilbert series of $E_3(X,n)^{S_n}$ with respect to bigrading and the Hodge type. More precisely,
\begin{equation}\label{eq:precise}
    H^{w_1,w_2;i}(\Conf_n(X))\simeq \sum_{p+q=i} E_3^{(p,q,w_1,w_2)}(X,n)^{S_n},
\end{equation}
where $E_3^{(p,q,w_1,w_2)}$ denotes the part of $E_3^{p,q}$ with Hodge type $(w_1,w_2)$. 

Define the operator
\begin{equation}
e_n=\frac{1}{n!}\sum_{w\in S_n} w,
\end{equation}
then $E_2(X,n)^{S_n}=e_n(E_2(X,n))$. 

To explicitly determine $E_2(X,n)^{S_n}$ together with $d$, we need to describe a set of generators whose differentials are easy to compute. A priori, the generators are given by $e_n$ of monomials in $g_{ij}$ and $\alpha_i$, whose differentials could be complicated due to the Leibniz rule. To overcome this difficulty, we shall show that many monomials vanish in $E_2(X,n)$ after applying $e_n$. To this end, we recall some useful notation and lemmas from \cite{cheonghuang2022betti}.

\begin{notation}
An ordered tuple $I=(i_1,i_2,\dots,i_r)$ is always assumed to consist of distinct elements of $[n]$. For $r\geq 2$ and $\alpha\in H^1(X)$, define
\begin{equation}
g_I:=g_{i_1i_2} g_{i_2i_3}\dots g_{i_{r-1}i_r}
\end{equation}
and
\begin{equation}
\alpha_I:=g_I \alpha_{i_1}=\dots=g_I \alpha_{i_r},
\end{equation}
which takes advantage of the relation \eqref{eq:rel2}.
\end{notation}

\begin{lemma}[\cite{cheonghuang2022betti}]
For $I=(i_1,\dots,i_r)$ with $r\geq 3$, we have the following identities in $E_2(X,n)$:
\begin{enumerate}
\item $g_I g_{i_r i_1}=0;$
\item $e_n(g_I)=0;$
\item $e_n(\alpha_I)=0$ for $\alpha\in H^1(X)$;
\item Any monomial in $g_{ij}$'s is a linear combination of monomials of the form $g_{J_1}\dots g_{J_h}$ for disjoint $J_1,\dots,J_h$, where $h\geq 1$ and $\abs{J_1},\dots,\abs{J_h}\geq 2$. 
\end{enumerate}
\end{lemma}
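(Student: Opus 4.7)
The plan is to derive all four identities from the Arnold relation \eqref{eq:rel1}, the pullback relation \eqref{eq:rel2}, and the graded commutativity of $E_2(X,n)$ (which makes the odd-degree generators $g_{ij}$ anti-commute pairwise and in particular forces $g_{ij}^2=0$), proving them in order $(a)\to(b)\to(c)\to(d)$ with each later part leveraging the earlier ones.

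For part (a) I would induct on $r$. In the base case $r=3$, multiply the Arnold relation $g_{i_1 i_2}g_{i_2 i_3}+g_{i_2 i_3}g_{i_3 i_1}+g_{i_3 i_1}g_{i_1 i_2}=0$ on the right by $g_{i_3 i_1}$: the second summand directly contains $g_{i_3 i_1}^2=0$, and in the third, graded-commuting $g_{i_3 i_1}$ through the odd factor $g_{i_1 i_2}$ again produces $g_{i_3 i_1}^2=0$, leaving $g_{i_1 i_2}g_{i_2 i_3}g_{i_3 i_1}=0$. For the inductive step $r\geq 4$, apply Arnold to the triple $(i_{r-1},i_r,i_1)$ to rewrite $g_{i_{r-1}i_r}g_{i_r i_1}$ as $-g_{i_r i_1}g_{i_1 i_{r-1}}-g_{i_1 i_{r-1}}g_{i_{r-1}i_r}$; after substituting into $g_I g_{i_r i_1}$ and regrouping by graded commutativity, both resulting summands factor through the shorter closed cycle $g_{I'}g_{i_{r-1}i_1}$ for $I'=(i_1,\dots,i_{r-1})$, which vanishes by induction.

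For parts (b) and (c): averaging over $S_n$ reduces (b) to showing that $\sum_J g_J=0$ as $J$ ranges over all orderings of a fixed $r$-element subset $T\subseteq[n]$. The case $r=3$ follows from graded commutativity alone by pairing each ordering with its reversal, since the reversal of a product of two odd factors carries a sign $-1$. For general $r$, the cleanest route is to identify the subalgebra of $E_2(X,n)$ generated by the $g_{ij}$ with the Orlik--Solomon algebra of the braid arrangement, hence with $H^*(\PConf_n(\C);\Q)$; its $S_n$-invariants coincide with $H^*(\Conf_n(\C);\Q)$, which by the classical theorem of Arnold is concentrated in cohomological degrees $0$ and $1$, while $g_I$ lives in degree $r-1\geq 2$. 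Part (c) then follows from (b): using \eqref{eq:rel2} one has $\alpha_I=\tfrac{1}{r}g_I(\alpha_{i_1}+\cdots+\alpha_{i_r})$, and upon applying $e_n$ and grouping the resulting terms by the image set $T=\{w(i_k)\}$, the factor $\alpha_{i_1}+\cdots+\alpha_{i_r}$ transforms to the $S_T$-invariant element $\sum_{j\in T}\alpha_j$, which pulls out and leaves the symmetrization of $g_I$ over orderings of $T$ as a cofactor, zero by (b).

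For part (d), I would induct on a suitable complexity measure of the multigraph $G\subseteq[n]$ whose edges record the indices of the factors of the monomial. Any repeated edge forces the monomial to vanish by $g_{ij}^2=0$, so one may assume $G$ is simple. If $G$ contains any cycle, part (a) (after graded-commuting the cycle edges into path order) shows that the monomial equals zero, reducing us to the case where $G$ is a forest. In a forest, any vertex of degree $\geq 3$ carries a star that can be split into two path-shaped pieces via Arnold, as in the model identity $g_{ab}g_{ac}g_{ad}=g_{ba}g_{ac}g_{cd}+g_{cd}g_{da}g_{ab}$. Iterating this step (invoking (a) again to discard any terms whose edge set acquires a cycle) expresses the original monomial as a linear combination of products of disjoint paths, as desired. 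The main obstacle throughout is part (b): a fully self-contained combinatorial proof valid for every $r$ requires real care, since the easy reversal-based cancellation available for small $r$ does not extend uniformly. Invoking Arnold's theorem on the rational cohomology of the pure braid group makes this step uniform at the cost of importing topological input.
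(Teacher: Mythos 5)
Your proposal is correct and follows essentially the same route as the paper, which only sketches the argument by citing the well-known Orlik--Solomon relations for (a) and (b), deducing (c) from (b) together with $\alpha_I=g_I\alpha_{i_j}$, and proving (d) by the same graph-theoretic induction (kill cycles via (a), then split non-chain forests via \eqref{eq:rel1}). The details you supply — the induction on $r$ for (a), the appeal to the vanishing of $H^{\geq 2}(\Conf_n(\C);\Q)$ for (b), and the averaging trick of Remark \ref{rmk:algebraTrick} for (c) — are exactly the standard ones the paper is implicitly invoking from \cite{cheonghuang2022betti}.
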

\begin{proof}[Sketch of Proof]
The first two are well-known relations for the Orlik--Solomon algebra of the braid arrangement in $\C^n$. The third uses the (b) and the fact that $\alpha_I$ is equal to $g_I \alpha_{i_j}$ for any $1\leq j\leq r$. To prove (d), view a monomial in $g_{ij}$'s as a simple graph $G$ with vertex set $[n]$. By (a), we may assume $G$ is a forest. If some component of $G$ is not a chain, we apply \eqref{eq:rel1} and proceed by induction.
\end{proof}

The following is elementary but crucially used in \cite{cheonghuang2022betti}.

\begin{lemma}
We have $e_4(g_{12}g_{34})=0$ and $e_2(\alpha_i \alpha_j)=0$ for $\alpha\in H^1(X)$.
\end{lemma}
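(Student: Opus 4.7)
The plan is to exploit graded anti-commutativity: both $\alpha_i$ and $g_{ij}$ have total degree $1$ in the graded-commutative algebra $E_2(X,n)$, so they anti-commute with themselves and with each other. In each case, I will exhibit a single permutation that acts on the given monomial by $-1$; since $e_n$ factors through that permutation's action, the symmetrization must vanish.

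For the first identity, note that $\alpha_i$ and $\alpha_j$ both have bidegree $(1,0)$, hence odd total degree. Then the transposition $(ij) \in S_2$ sends $\alpha_i \alpha_j$ to $\alpha_j \alpha_i = -\alpha_i \alpha_j$ by graded commutativity. Since $e_2 \cdot (ij) = e_2$, we conclude
\begin{equation*}
e_2(\alpha_i \alpha_j) = e_2\bigl((ij) \cdot \alpha_i \alpha_j\bigr) = -e_2(\alpha_i \alpha_j),
\end{equation*}
forcing $e_2(\alpha_i \alpha_j) = 0$.

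For the second identity, the generators $g_{ij}$ have bidegree $(0,1)$, also odd total degree, so $g_{12}$ and $g_{34}$ anti-commute. Consider the double transposition $w_0 = (13)(24) \in S_4$, which swaps the unordered pair $\{1,2\}$ with the unordered pair $\{3,4\}$. Then
\begin{equation*}
w_0 \cdot (g_{12} g_{34}) = g_{34} g_{12} = -g_{12} g_{34},
\end{equation*}
so again $e_4(g_{12} g_{34}) = e_4(w_0 \cdot g_{12} g_{34}) = -e_4(g_{12} g_{34}) = 0$.

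There is no real obstacle here, provided one keeps track that the relations \eqref{eq:rel1}--\eqref{eq:rel3} are imposed on top of the standard graded-commutative structure of $\bigwedge[g_{ij}, x^{(a)}_i, y^{(a)}_i]$, so that odd-degree generators genuinely anti-commute. The only minor subtlety is that $g_{ij} = g_{ji}$ as elements (since the generator is indexed by an unordered pair), so the $S_n$-action does not itself introduce a sign via reordering of the subscripts of a single $g$ --- the sign comes entirely from reordering the product $g_{12} \cdot g_{34}$.
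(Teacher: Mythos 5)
Your proof is correct and follows exactly the route the paper indicates: the lemma is stated without a written-out proof, but the remark immediately after it says the argument "uses the graded commutativity and that $\deg(g_{ij})=\deg(\alpha_i)=1$, an odd number," which is precisely your sign argument via the transposition $(i\,j)$ and the double transposition $(1\,3)(2\,4)$. (Only a cosmetic quibble: you call the $\alpha_i\alpha_j$ identity the "first" one, whereas the statement lists $e_4(g_{12}g_{34})=0$ first.)
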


\begin{remark}
The proof uses the graded commutativity and that $\deg(g_{ij})=\deg(\alpha_i)=1$, an odd number. Importantly, the proof does not apply if we replace $g_{ij}$ by $\alpha_{ij}$ for $\alpha\in H^1(X)$, since $\alpha_{ij}$ is of even degree $2$.
\end{remark}

Using similar arguments in \cite{cheonghuang2022betti}, we conclude the following.

\begin{lemma}
As $\Q$-vector spaces, $E_2(X,n)^{S_n}$ is generated by elements of the form
\begin{equation}\label{eq:generators}
e_n\parens[\bigg]{g_{\bullet\bullet}^{\epsilon_0}(x\spp{1}_\bullet)^{\epsilon_1} \dots (x\spp{g}_\bullet)^{\epsilon_g} (y\spp{1}_\bullet)^{\epsilon_{g+1}} \dots (y\spp{g}_\bullet)^{\epsilon_{2g}} (x\spp{1}_{\bullet\bullet}\dots x\spp{1}_{\bullet\bullet})\dots (x\spp{g}_{\bullet\bullet}\dots x\spp{g}_{\bullet\bullet}) (y\spp{1}_{\bullet\bullet}\dots y\spp{1}_{\bullet\bullet})\dots (y\spp{g}_{\bullet\bullet}\dots y\spp{g}_{\bullet\bullet})
},
\end{equation}
where each $\epsilon_i$ is 0 or 1 (if 0, the corresponding factor does not appear, and we do not assign values to $\bullet$'s in it), $\bullet$'s are distinct indices in $[n]$, and each of the products $x\spp{a}_{\bullet\bullet}\dots x\spp{a}_{\bullet\bullet}$ and $y\spp{a}_{\bullet\bullet}\dots y\spp{a}_{\bullet\bullet}$ is allowed to be empty. \qed
\end{lemma}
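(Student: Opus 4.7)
The plan is to take an arbitrary monomial in the generators $g_{ij}, x^{(a)}_i, y^{(a)}_i$ and successively reduce it using the relations \eqref{eq:rel1}--\eqref{eq:rel3} together with the two preceding lemmas, until what survives after applying $e_n$ has the claimed shape. By part (d) of the first lemma, I may begin by assuming that the $g$-factors of the monomial group into a product $g_{J_1}\cdots g_{J_h}$ with $J_1,\dots,J_h$ pairwise disjoint subsets of $[n]$, each of size at least $2$. Using \eqref{eq:rel2}, any $\alpha$-factor whose index lies in some $J_k$ may be translated to any other index in $J_k$; if two or more $\alpha$-factors end up at indices of the same $J_k$, then pushing them all to a common index within $J_k$ produces a factor $\alpha_i\beta_i = 0$ by \eqref{eq:rel3}. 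Hence after this absorption step, each $J_k$ carries at most one $\alpha$, giving either a bare factor $g_{J_k}$ or a paired factor $\alpha_{J_k}$, and the remaining bare $\alpha$-factors live on indices outside $\bigcup_k J_k$. All these pieces then occupy pairwise disjoint index sets.

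The engine for the remaining reductions is an index-separation principle: if a monomial $N$ splits as $N_1 N_2$ with $N_1, N_2$ supported on disjoint index sets $A, B \subseteq [n]$, then
\begin{equation*}
e_n(N) = e_n\parens*{e_A(N_1) \cdot e_B(N_2)},
\end{equation*}
where $e_A = \frac{1}{|A|!}\sum_{w \in S_A} w$ (and similarly $e_B$) denotes averaging over the subgroup $S_A \subseteq S_n$ fixing $[n]\setminus A$ pointwise. This holds because $S_A \times S_B \subseteq S_n$ and $e_n$ is already invariant under this subgroup, so $e_n = e_n \cdot (e_A e_B)$. Consequently, if one of the factors $N_i$ vanishes upon averaging over its own indices, the whole $e_n(N)$ vanishes.

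Applying this principle finishes the reduction. A bare $g_{J_k}$ with $|J_k|\geq 3$ is killed because $e_{J_k}(g_{J_k}) = 0$ by part (b); a paired $\alpha_{J_k}$ with $|J_k|\geq 3$ is killed because $e_{J_k}(\alpha_{J_k}) = 0$ by part (c). So every surviving $J_k$ has $|J_k|=2$. If two or more bare $g_{J_k}$'s remain, choosing any two with combined index set $B$ of size $4$ yields $e_B(g_{J_1}g_{J_2}) = e_4(g_{12}g_{34}) = 0$; similarly, two or more bare $\alpha$-factors of the same type give $e_2(\alpha_i\alpha_j) = 0$. What is left is exactly the shape stated in the lemma: at most one bare $g_{\bullet\bullet}$, at most one bare $\alpha_\bullet$ per type, and any number of paired $\alpha_{\bullet\bullet}$'s per type, on pairwise distinct indices. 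The main subtlety I expect in carrying this out is the sign bookkeeping when moving the odd-degree generators past one another during the absorption step; however, the signs only affect coefficients and never disturb the shape of the surviving monomials, so the conclusion of the lemma will follow.
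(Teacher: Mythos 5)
Your proposal is correct and follows exactly the route the paper intends: reduce via part (d) to disjoint chains, absorb the $\alpha$'s using \eqref{eq:rel2}--\eqref{eq:rel3}, and then kill long chains, repeated bare $g$-pairs, and repeated bare $\alpha$'s using parts (b), (c), the lemma $e_4(g_{12}g_{34})=e_2(\alpha_i\alpha_j)=0$, and the index-separation trick, which is precisely Remark~\ref{rmk:algebraTrick}. The paper defers these details to \cite{cheonghuang2022betti}, but your argument is the same one, correctly assembled.
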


\begin{remark}
Clearly, the element \eqref{eq:generators} does not depend on the exact assignments of the $\bullet$'s, as long as the number of $\bullet$'s is at most $n$. Therefore, the element \eqref{eq:generators} is determined by the combinatorial datum in Definition \ref{def:type} below.
\end{remark}

\begin{remark} \label{rmk:algebraTrick}
While we do not reproduce the proofs of \cite{cheonghuang2022betti}, we do take a moment to record a particular algebra trick that will continue to be useful to us going forward. Let us say there is an element $f\cdot f'\in E_2(X,n)$, where $f$ only involves the indices in some $J\subeq [n]$, and $f'$ only involves the indices in $J':=[n]\setminus J$. Suppose we have proven that $e_J(f)=0$, where
\begin{equation}
e_J=\frac{1}{\abs{J}!}\sum_{w\in S_{J}} w,
\end{equation}
where $S_J$ is the subgroup of $S_n$ consisting of permutations fixing $J'$. Then $e_J(f\cdot f')=e_J(f)\cdot f'=0$, and since $e_n$ factors through $e_J$, we have $e_n(f\cdot f')=0$. 
\end{remark}

The previous Lemma allows us to create an analog of $E_2(X,n)^{S_n}$ that is defined entirely combinatorially.\\

\begin{definition} \label{def:type}
    
Fix $g\geq 1$. Let $[g]:=\set{1,\dots,g}$ and let $\Gamma_{g}$ be the set of tuples $\mathbf{t}=(\rho,S,T,\vec u,\vec v)$ with
\begin{equation}
    \rho\in \set{0,1}, S,T\subeq [g], \vec u=(u_a)_{a\in [g]}, \vec v=(v_a)_{a\in [g]}, u_a,v_a\in \Z_{\geq 0}.
\end{equation}

Define a length function on $\Gamma_g$ by
\begin{equation}
    \ell(\mathbf{t}):=2\rho+\abs{S}+\abs{T}+2\abs{\vec u}+2\abs{\vec v},
\end{equation}
where $\abs{\vec u}=\sum_a u_a$ and $\abs{\vec v}=\sum_a v_a$. This induces a filtration on $\Gamma_g$:
\begin{equation}
    \Gamma_{g,0}\subeq \Gamma_{g,1}\subeq \dots
\end{equation}
where $\Gamma_{g,n}=\set{\mathbf{t}\in \Gamma_g: \ell(\mathbf{t})\leq n}$. Define grading functions $p,q,w_1,w_2$ on $\Gamma_g$ by
\begin{equation}\label{eq:grading}
    p(\mathbf{t})=\abs{S}+\abs{T}+\abs{\vec u}+\abs{\vec v}, \; q(\mathbf{t})=r+\abs{\vec u}+\abs{\vec v}, \; w_1(\mathbf{t})=r+\abs{S}+2\abs{\vec u}+\abs{\vec v}, \; w_2(\mathbf{t})=r+\abs{T}+\abs{\vec u}+2\abs{\vec v}.
\end{equation}
We thus get vector spaces $\Q \Gamma_g$ and $\Q \Gamma_{g,n}$ quadruply graded by $p,q,w_1,w_2$. We will interpret $(p,q)$ as the bidegree in the spectral sequence and $(w_1,w_2)$ as the Hodge type.
\end{definition}

In this notation, the distinguished generators $f_\mathbf{t}$ for $\mathbf{t}\in \Gamma_{g,n}$ generate $E_2(X,n)^{S_n}$. The differential of any $f_\mathbf{t}$ is easy to describe, thanks to the following observation.

\begin{lemma}\label{lem:diff-vanish}
For $\alpha\in H^1(X)$, we have $d\alpha_{ij}=0$. 
\end{lemma}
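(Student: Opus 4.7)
The plan is to compute $d\alpha_{ij}$ directly from the Leibniz rule using the explicit formula for $dg_{ij}$ given in \eqref{eq:differential}, and then to recognize that every resulting monomial contains a product of two elements of $H^1(X)$ pulled back via the same projection, forcing it to vanish by relation \eqref{eq:rel3}.

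First, I would write $\alpha_{ij}=g_{ij}\alpha_i$ (using \eqref{eq:rel2}) and apply the graded Leibniz rule. Since $\deg g_{ij}=1$ and $d\alpha_i=0$, this gives
\begin{equation*}
d\alpha_{ij}=d(g_{ij})\cdot \alpha_i = [\Delta]_{ij}\cdot\alpha_i,
\end{equation*}
where by $[\Delta]_{ij}$ I mean the diagonal-class expression of \eqref{eq:differential} with its two indices replaced by $i$ and $j$, namely
\begin{equation*}
[\Delta]_{ij}=-\sum_{a=1}^{g}\parens*{x\spp{a}_i y\spp{a}_j + x\spp{a}_j y\spp{a}_i}.
\end{equation*}

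Next, I would show that each of the $2g$ resulting monomials $x\spp{a}_i y\spp{a}_j\,\alpha_i$ and $x\spp{a}_j y\spp{a}_i\,\alpha_i$ vanishes in $E_2(X,n)$. All three factors in each monomial have (odd) bidegree $1$, so the graded-commutativity of $E_2(X,n)$ lets me move the factor at index $i$ next to $\alpha_i$: for example,
\begin{equation*}
x\spp{a}_i y\spp{a}_j \alpha_i = -x\spp{a}_i \alpha_i\, y\spp{a}_j,\qquad x\spp{a}_j y\spp{a}_i \alpha_i = -y\spp{a}_i\alpha_i\,x\spp{a}_j.
\end{equation*}
Because $x\spp{a}$ and $\alpha$ both lie in $H^1(X)$, relation \eqref{eq:rel3} gives $x\spp{a}_i\alpha_i=0$, and likewise $y\spp{a}_i\alpha_i=0$. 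Hence every summand of $[\Delta]_{ij}\cdot\alpha_i$ vanishes, yielding $d\alpha_{ij}=0$.

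There is no real obstacle here; the only thing to check carefully is the sign bookkeeping in the graded-commutative rearrangement and the identification that the factors at index $i$ really do lie in $H^1(X)$ so that \eqref{eq:rel3} applies. The argument goes through verbatim if one instead starts from $\alpha_{ij}=g_{ij}\alpha_j$, with the roles of $i$ and $j$ interchanged, which is a useful consistency check.
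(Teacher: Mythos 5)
Your proof is correct and is exactly the argument the paper intends: its own proof is just the remark that the lemma follows by ``a direct calculation as a result of \eqref{eq:rel3} and \eqref{eq:differential}'', which is precisely the Leibniz-rule computation you spell out. The sign bookkeeping and the identification of the vanishing factors $x\spp{a}_i\alpha_i$ and $y\spp{a}_i\alpha_i$ are all as you describe.
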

\begin{proof}
This is a direct calculation as a result of \eqref{eq:rel3} and \eqref{eq:differential}. We remark that the calculation fails if some $\alpha_i\beta_i$ is nonzero, which would occur if $X$ were the unpunctured $\Sigma_{g,0}$. 
\end{proof}

We may use Lemma \ref{lem:diff-vanish} to define a linear map $d:\Q\Gamma_g\to \Q\Gamma_g$ as follows. Let $\mathbf{t}=(r,S,T,\vec u,\vec v)\in \Q\Gamma_g$. If $r=0$, set $df_\mathbf{t}=0$. If $r=1$, set
\begin{equation}\label{eq:diff-abstract}
    df_{(1,S,T,\vec u,\vec v)}=-2\sum_{a\in [g]\setminus (S\cup T)} (-1)^{\abs{S}+\abs{T_{<a}}}f_{(0,S\sqcup a,T\sqcup a, \vec u,\vec v)},
\end{equation}
where $T_{<a}$ is the set of elements of $T$ that are $<a$. 

Note that $d$ induces a map $\Q\Gamma_{g,n}\to \Q\Gamma_{g,n}$. Moreover, the matrix coefficients of $d$ depend only on the $S$ and $T$ parts. More precisely, there are constants $c^{S,T}_{S',T'}$ defined for $S,T,S',T'\subeq [g]$ with $\abs{S'}+\abs{T'}=\abs{S}+\abs{T}+2$ such that
\begin{equation} \label{eq:TheC's}
    -\frac12 df_{(1,S,T,\vec u,\vec v)}=\sum_{\substack{S',T'\\\abs{S'}+\abs{T'}=\abs{S}+\abs{T}+2}} c^{S,T}_{S',T'}f_{(1,S',T',\vec u,\vec v)}.
\end{equation}

The crux of the proof of Theorem \ref{thm:main} is to show that the $f_\mathbf{t}$ form a basis for $E_2(X,n)^{S_n}$. In particular, that there is an isomorphism of $dg$-algebras $\Q\Gamma_{g,n} \cong E_2(X,n)$. From what has been discussed thus far, we already know that the elements $f_\mathbf{t}$ generate, and so it suffices to show that they are linearly independent of one another. This will be completed in the next section.

\section{The proof of Theorem \ref{thm:main}}

\subsection{The proof that the distinguished generators form a basis}

For any finite index set $J$, we have the notation $X^J=\prod_{j\in J}X$. K\"unneth gives $H^*(X^J)=H^*(X)^{\otimes J}$. Since $H^*(X)=H^0(X)\oplus H^1(X)$, we have the multi-degree decomposition

\begin{align*}
    H^*(X^J)=\bigoplus_{\mathbf{p}\in \set{0,1}^J} H^{\mathbf{p}}(X), \text{ where }\; H^{\mathbf{p}}(X):=\bigotimes_{j\in J} H^{\mathbf{p}_j}(X).
\end{align*}

Note that this same decomposition is compatible with any of the usual refinements of the degree decomposition $H^*(X)=H^0(X)\oplus H^1(X)$. In particular, it remains valid when one considers mixed Hodge numbers.  Since $H^*(X)=\Q\pairing{C}$ where
$C=\set{1,x\spp{1},\dots,x\spp{g},y\spp{1},\dots,y\spp{g}}$, we can write,

\begin{align*}
H^*(X)=\bigoplus_{c\in C} \mathbb{Q}\langle c \rangle = \bigoplus_{c\in C} H(X)|_{c},
\end{align*}

so that

\begin{align*}
H^*(X^J)=\bigoplus_{\mathbf{c}\in C^J} H(X)|_{\mathbf{c}}, \text{ where } H(X)|_{\mathbf{c}}:=\bigotimes_{j\in J} H(X)|_{\mathbf{c}_j}.
\end{align*}

We interpret $\mathbf{c}$ as a coloring of $J$ with colors from $C$. We have $\deg(c)=0$ if $c=1$ and $\deg(c)=1$ otherwise. Naturally, we define $\deg(\mathbf{c})=(\deg(\mathbf{c}_j))_{j\in J}\in \set{0,1}^J.$

Any (unordered) set partition $\pi = \set{B_1,\dots,B_{\abs{\pi}}}$ of $[n]$ defines a ``fat diagonal" in $X$, which we denote $X^{\pi}$,

\begin{align*}
X^\pi = \Hom(\pi,X) \simeq \set{(x_i)_{i\in [n]}: x_i=x_j\text{ if }i,j\in B_k, 1\leq k\leq \abs{\pi}}.
\end{align*}

Our next definition aligns the notion of partitioning the Cartesian product space $X^n$, with the notion of a $C$-coloring discussed above. 

\begin{definition}
    Let $\mathcal{P}$ be the set of set partitions of $[n]$. We define a \defn{$C$-colored partition} of $[n]$ to be a map $\mathbf{c}^\pi:\pi\to C$, where $\pi\in \mathcal{P}$. We write $\mathcal{CP}$ for the set of $C$-colored partitions. To each $\mathbf{c}^\pi$ we associate the composition map $\mathbf{c}:[n]\onto \pi \map[\mathbf{c}^\pi] C$, where the first map is the natural map that sends an element to the block it belongs to. 
\end{definition}

We will generally denote a $C$-colored partition by the pair $(\pi,\mathbf{c})$. Clearly, one may think of a $C$-colored partition as a set partition of $n$ whose every block has been assigned an element of $C$. According to this perspective, the map $\mathbf{c}$ records the $C$-color assigned to every individual element of $[n]$, whereas $\mathbf{c}^\pi$ accounts for the colors assigned to the blocks of $\pi$. We separate these two pieces of information in our notation, as we will ultimately need to call upon both of them in what follows.

We note that the symmetric group $S_n$ acts on $\mathcal{CP}$ on the right. To make this point clear, we observe that $\mathcal{P}$ has a more categorical definition: 
\[ \mathcal{P}=\set{[n]\onto \pi}/{\sim},\]
where $\pi$ is an abstract set, and the equivalence relation $\sim$ is generated by postcomposition by a bijection. Similarly, a $C$-colored partition of $[n]$ is an equivalence class of commutative triangles
\[
\begin{tikzcd}
\left[ n \right] \arrow[r, two heads] \arrow[dr,"\mathbf{c}"]
& \pi\arrow[d,"\mathbf{c}^\pi"] \\
& C
\end{tikzcd},
\]
where $\pi$ is a set, and the equivalence is up to composition with a bijection $\pi\map[\simeq]\pi'$. Note that such a composition is both a post-composition for the top map and a pre-composition for $\mathbf{c}^\pi$. Hence, the symmetric group $S_n$ acts on $\mathcal{CP}$ on the right by pre-composing the top map and $\mathbf{c}$.

For any $\mathbf{t}$, Our distinguished generator $f_\mathbf{t}$ will be related with a special class of $C$-colored partitions, defined as follows.

\begin{definition}
We say that a $C$-colored partition is of type $\mathbf{t} = (\rho,S,T,\vec{u},\vec{v})$ if it is compatible with $\mathbf{t}$ in the following sense:
\begin{itemize}
    \item if $\rho = 1$, then the partition has precisely one block of two elements that has been colored with $1 \in C$. Otherwise, if $\rho = 0$ then the partition has no block of size 2 colored with $1$;
    \item for each $i \in S$, the partition has a singleton block colored with $x^{(i)}$;
    \item for each $i \in T$, the partition has a singleton block colored with $y^{(i)}$;
    \item for each $1 \leq j \leq g$, the partition has $u_j$ size two blocks with color $x^{(j)}$;
    \item for each $1 \leq j \leq g$, the partition has $v_j$ size two blocks with color $x^{(j)}$;
    \item all remaining blocks in the partition are of size one, and colored with $1 \in C$.
\end{itemize}
We will write $\mathcal{CP_{\mathbf{t}}}$ for the set of $C$-colored partitions of type $\mathbf{t}$. Note that if $(\pi,\mathbf{c})\in \mathcal{CP}_\mathbf{t}$ for some $\mathbf{t}$, then $\pi$ can only contain singletons and pairs.
\end{definition}

Observe that, for any fixed $\mathbf{t}$, $\mathcal{CP}_\mathbf{t}$ is a single $S_n$ orbit and $\mathcal{CP}_\mathbf{t}\cap \mathcal{CP}_{\mathbf{t}'}=\varnothing$ if $\mathbf{t}\neq \mathbf{t}'$. Both of these facts will be relevant in what follows, as they essentially allow us to stratify $E_2^{*,*}(X,n)$ in a way that makes our desired linear independence of the distinguished generators more apparent. In particular, using the description of our spectral sequence given in \eqref{eq:E2Page},
\begin{align*}
   E_2^{*,*}(X,n)=\bigoplus_{( \pi,\mathbf{c})\in\mathcal{CP}} E_2^{\pi,\mathbf{c}}(X,n), \text{ where } E_2^{\pi,\mathbf{c}}(X,n):= H(X)|_{\mathbf{c}^\pi} \otimes A_\pi(\mathcal{A}),
\end{align*} 
where $A_\pi(\mathcal{A})$ is the Orlik--Soloman algebra of the flat within the braid arrangement determined by $\pi$, denoted by $F_\pi$. It is worth explictly pointing out that the definition of $E_2^{\pi,\mathbf{c}}(X,n)$ given above contains the term $H(X)|_{\mathbf{c}^\pi}$, where $\mathbf{c}^\pi$ is the coloring on \emph{blocks} of $\pi$ and not the coloring of $[n]$. More concretely, for a given $C$-colored partition $(\pi,\mathbf{c})$, we may construct monomials in $\bigwedge[g_{ij}, x\spp{a}_i, y\spp{a}_i]$ by selecting for all time a representative for each block of $\pi$, and,
\begin{itemize}
    \item for each representative $i_B$ selected above, appending $\alpha_{i_B}$ where $\alpha = \mathbf{c}^\pi(B)$, or $1$ if $\mathbf{c}^\pi(B) = 1$.
    \item For any collection of pairs $\{\{i_1,j_1\}\ldots \{i_l,j_l\}\}$, such that $l = \codim(F_\pi)$ and $F_\pi$ is equal to the intersection of the associated flats in the braid arrangement determined by the pairs, append the product $g_{i_1j_1}\cdots g_{i_lj_l}$.
\end{itemize}

To be clear, while the first step of the above process is determined entirely by the initial choice of representatives of the blocks of the partition $\pi$, one obtains different monomials by making different selections in the second step. For example, if $n=3, \pi=\set{\set{1,2,3}}$, and $\mathbf{c}^\pi=1$, then $g_{12}g_{13}$ and $g_{12}g_{23}$ are both monomials belonging to $E_2^{\pi,\mathbf{c}}(X,n)$. On the other hand, if $\pi = \set{\set{1,2},\set{3}}$ with $\mathbf{c}^\pi = (x\spp{2},y\spp{1})$, and $2$ as the chosen representative of the first block of $\pi$, then $x\spp{2}_2y\spp{1}_3g_{12}$ is the unique monomial in $E_2^{\pi,\mathbf{c}}(X,n)$.

We observe that relation $\eqref{eq:rel1}$ appears in $A_\pi(\mathcal{A})$, whereas relations \eqref{eq:rel2} and \eqref{eq:rel3} are baked into the monomials described above by the nature of a $C$-colored partition - one never has both $\alpha_i$ and $\beta_i$ appearing because $i$ only has a single color, and the first step of our coloring procedure picks a single $i$ for each block in $\pi$, avoiding the possibility of $\alpha_ig_{ij}$ and $\alpha_jg_{ij}$ both appearing. Returning to what was said above, the relation \eqref{eq:rel2} is essentially encoded in the fact that the right hand side of $E_2^{\pi,\mathbf{c}}(X,n)= H(X)|_{\mathbf{c}^\pi} \otimes A_\pi(\mathcal{A})$ is written in terms of $\mathbf{c}^\pi$ rather than $\mathbf{c}$. These facts justify our claim that $E_2^{*,*}(X,n)$ decomposes in this way.

This translation of a $C$-colored partition $(\pi,\mathbf{c})$ to a collection of monomials in $\bigwedge[g_{ij}, x\spp{a}_i, y\spp{a}_i]$ also illustrates the motivation behind the definition of compatibility with a tuple $\mathbf{t}$. If $(\pi,\mathbf{c}) \in \mathcal{CP}_\mathbf{t}$, and $m$ is a monomial appearing as described above in $E_2^{\pi,\mathbf{c}}(X,n)$, then this monomial can be rearranged to look like the term appearing inside the $e_n$ in the equation \eqref{eq:generators}. Moreover, because higher dimensionality of $E_2^{\pi,\mathbf{c}}(X,n)$ comes from the presence of choice in selecting the $g_{ij}$ terms (i.e. the second step in our above construction), $E_2^{\pi,\mathbf{c}}(X,n)$ must be one dimensional provided $(\pi,\mathbf{c})$ is of type $\mathbf{t}$, as every block has size 1 or 2.

Finally, observe that the above decomposition is compatible with the various symmetric group actions, in the sense of, for any $\eta \in S_n$,
\[
\eta \cdot E_2^{(\pi,\mathbf{c})} = E_2^{(\pi,\mathbf{c})\cdot \eta^{-1}}(X,n)
\]

\begin{proposition}
    The distinguished generators, $f_{\mathbf{t}}$, are linearly independent of one another.
\end{proposition}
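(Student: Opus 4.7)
The plan is to leverage the $S_n$-equivariant decomposition
\[ E_2^{*,*}(X,n)=\bigoplus_{(\pi,\mathbf{c})\in\mathcal{CP}} E_2^{\pi,\mathbf{c}}(X,n) \]
together with the fact, already recorded in the excerpt, that distinct types $\mathbf{t}$ label disjoint $S_n$-orbits. Because each subspace $E_2^{\mathcal{CP}_\mathbf{t}}:=\bigoplus_{(\pi,\mathbf{c})\in \mathcal{CP}_\mathbf{t}} E_2^{\pi,\mathbf{c}}(X,n)$ is $S_n$-stable, and the different $E_2^{\mathcal{CP}_\mathbf{t}}$ are internal direct summands of $E_2(X,n)$, it suffices to show that each $f_\mathbf{t}$ is a \emph{nonzero} element of the corresponding summand $E_2^{\mathcal{CP}_\mathbf{t}}(X,n)$; then linear independence follows because the summands pairwise meet only at zero.

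First I would fix a representative $(\pi_0,\mathbf{c}_0)\in \mathcal{CP}_\mathbf{t}$ and a nonzero monomial $m_{\mathbf{t},0}$ spanning the one-dimensional space $E_2^{\pi_0,\mathbf{c}_0}(X,n)$, so that up to normalization $f_\mathbf{t}=\frac{1}{n!}\sum_{w\in S_n} w\cdot m_{\mathbf{t},0}$. By $S_n$-equivariance each term lies, up to sign, in a unique $E_2^{\pi,\mathbf{c}}(X,n)$ with $(\pi,\mathbf{c})\in\mathcal{CP}_\mathbf{t}$; in particular $f_\mathbf{t}\in E_2^{\mathcal{CP}_\mathbf{t}}(X,n)^{S_n}$ for free. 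The real content is ruling out sign cancellations, which amounts to checking that the stabilizer $H=H_{(\pi_0,\mathbf{c}_0)}\leq S_n$ acts trivially on the one-dimensional space $E_2^{\pi_0,\mathbf{c}_0}(X,n)$; granting this, the symmetrization collapses to the manifestly nonzero positive-coefficient sum
\[ f_\mathbf{t}=\frac{\abs{H}}{n!}\sum_{(\pi,\mathbf{c})\in \mathcal{CP}_\mathbf{t}} m_{(\pi,\mathbf{c})}, \]
where $m_{(\pi,\mathbf{c})}$ is the monomial generator of $E_2^{\pi,\mathbf{c}}(X,n)$ obtained by transporting $m_{\mathbf{t},0}$.

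The triviality of the $H$-action is the heart of the argument, and it is where I expect the main obstacle — sign tracking in the graded-commutative algebra — to appear. The stabilizer $H$ is generated by three kinds of moves: (i) the swap of the two indices inside a single pair block; (ii) the permutation of pair blocks that carry the same color under $\mathbf{c}_0^{\pi_0}$; and (iii) the permutation of singleton blocks colored $1\in C$, which is the only color that can label multiple singletons at once — the constraints $S,T\subeq [g]$ force each singleton color $x\spp{a}$ or $y\spp{a}$ to occur at most once, and $\rho\in\set{0,1}$ likewise prevents pair blocks colored $1$ from occurring more than once. For (i), the relevant factors $g_{ij}$ or $g_{ij}\alpha_i$ (with $\alpha\in H^1(X)$) are symmetric in $i,j$ thanks to \eqref{eq:rel2}; for (ii), each repeated factor $x\spp{a}_{ij}=g_{ij}x\spp{a}_i$ or $y\spp{a}_{ij}=g_{ij}y\spp{a}_i$ has total degree $2$ (even), so their pairwise interchange in the graded-commutative product introduces no sign; and for (iii), the singletons colored $1$ contribute no factor to the monomial at all. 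Thus $H$ fixes $m_{\mathbf{t},0}$.

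It is worth highlighting what could go wrong: were the ``doubled'' factors of odd total degree, then swapping two of them would produce a sign, and symmetrization over same-color pairs could kill the element outright. This is precisely what happens in the elementary cautionary identity $e_2(\alpha_i\alpha_j)=0$ for $\alpha\in H^1(X)$ noted earlier. The whole argument therefore rests on the observation that our chosen ``doubled'' factors $x\spp{a}_{ij}$ and $y\spp{a}_{ij}$ carry bidegree $(1,1)$, making them even, while the singleton and single-$g_{ij}$ factors that remain genuinely odd occur with multiplicity at most one in each type $\mathbf{t}$.
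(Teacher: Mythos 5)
Your proposal is correct and follows essentially the same route as the paper: decompose $E_2(X,n)$ over $C$-colored partitions, use the disjointness of the orbits $\mathcal{CP}_\mathbf{t}$ to reduce linear independence to the nonvanishing of each $f_\mathbf{t}$, and rule out sign cancellation in the symmetrization via the evenness (bidegree $(1,1)$) of the doubled factors $x\spp{a}_{\bullet\bullet}$, $y\spp{a}_{\bullet\bullet}$. Your repackaging of the no-cancellation step as triviality of the stabilizer action on the one-dimensional space $E_2^{\pi_0,\mathbf{c}_0}(X,n)$, with the explicit list of generators (in-block swaps handled by \eqref{eq:rel2}, same-color block permutations, and color-$1$ singletons), is a slightly more carefully itemized version of the same argument the paper gives.
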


\begin{proof}
Fix $\mathbf{t}$, and a monomial $m_0 \in \bigwedge[g_{ij}, x\spp{a}_i, y\spp{a}_i]$ in the form of the term inside $e_n$ in \eqref{eq:generators}. Note that it must be a nonzero element in some $E_2^{\pi_0,\mathbf{c}_0}(X,n)$ for a unique choice of $(\pi_0,\mathbf{c}_0)\in \mathcal{CP}_\mathbf{t}$. Then we first claim that $f_\mathbf{t}:=e_n(m_0)$ is nonzero. 

To start, the polynomial $e_n(m_0)$ is, by definition, comprised of the sum of all monomials obtained by applying permutations to $m_0$. To show that $e_n(m_0) \neq 0$ it will suffice to show that the component of $e_n(m_0)$ that lands inside of $E_2^{\pi_0,\mathbf{c}_0}(X,n)$ is necessarily non-zero. Using our translation between $C$-labeled partitions and monomials, we see that a permutation $\eta$ applied to $m_0$ can only land in $E_2^{\pi_0,\mathbf{c}_0}(X,n)$ if $\eta$,
\begin{enumerate}
    \item fixes all size 1 blocks of color not equal to 1, and
    \item permutes the size 2 blocks within any color.
\end{enumerate}
Critically, in so far as the monomials are concerned, permuting the size two blocks within a given color, say $\alpha$, amounts to permuting the terms $\alpha_{\bullet,\bullet}$ in $m_0$. It isn't hard to see, however, that these terms commute in $E_2^{*,*}(X,n)$ because they are of bi-degree $(1,1)$. In particular, each permutation $\eta$ which keeps $m_0$ within $E_2^{\pi_0,\mathbf{c}_0}(X,n)$ must do so by fixing it rather than sending $m_0$ to $-m_0$. This precludes the possibility of cancellation in $e_n(m_0)$.

Having proven our claim, we next observe that
\[
f_{\mathbf{t}} = e_n(m_0) \in \bigoplus_{(\pi,\mathbf{c}) \in \mathcal{CP}_{\mathbf{t}}} E_2^{\pi,\mathbf{c}}(X,n),
\]
by our observation above that $\mathcal{CP}_\mathbf{t}$ is closed under the action of $S_n$. Because each of the $\mathcal{CP}_\mathbf{t}$ are disjoint from one another, and therefore the vector spaces $\bigoplus_{(\pi,\mathbf{c}) \in \mathcal{CP}_{\mathbf{t}}} E_2^{\pi,\mathbf{c}}(X,n)$ (as we vary $\mathbf{t}$) are linearly disjoint, there cannot be total cancellation in any linear combination of the $f_{\mathbf{t}}$. The proof is concluded by the fact that we have shown $f_{\mathbf{t}} \neq 0$.
\end{proof}

As a result, we conclude that
\begin{corollary}\label{cor:substitution}
    For $X=\Sigma_{g,1}$, we have $(E_2(X,n)^{S_n},d)\simeq (\Q\Gamma_{g,n},d)$ in a way that preserves the quadruple grading $(p,q,w_1,w_2)$.
    \hfill\qedsymbol
\end{corollary}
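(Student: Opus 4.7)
The plan is to show that the $\Q$-linear map $\phi:\Q\Gamma_{g,n}\to E_2(X,n)^{S_n}$ defined on basis elements by $\mathbf{t}\mapsto f_\mathbf{t}$ is an isomorphism of quadruply graded vector spaces intertwining the two differentials. Bijectivity is essentially already in hand: the lemma preceding Definition~\ref{def:type} shows that the $f_\mathbf{t}$ generate $E_2(X,n)^{S_n}$, and the preceding proposition establishes their linear independence. For grading compatibility I would simply read off the contributions to $(p,q,w_1,w_2)$ from each factor in the representative monomial of \eqref{eq:generators}: $g_{ij}$ contributes bidegree $(0,1)$ and Hodge type $(1,1)$; $x^{(a)}_\bullet$ and $y^{(a)}_\bullet$ contribute bidegree $(1,0)$ with Hodge types $(1,0)$ and $(0,1)$ respectively; and each pair factor $x^{(a)}_{\bullet\bullet}=g_{ij}x^{(a)}_i$ (resp.~$y^{(a)}_{\bullet\bullet}$) contributes bidegree $(1,1)$ and Hodge type $(2,1)$ (resp.~$(1,2)$). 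Summing with multiplicities $\rho,|S|,|T|,|\vec u|,|\vec v|$ exactly reproduces the formulas in \eqref{eq:grading}.

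The substantive step is to verify $d\phi=\phi d$. When $\rho=0$, every factor in the representative of $f_\mathbf{t}$ is either $\alpha_i$ (closed by \eqref{eq:differential}) or $\alpha_{ij}$ (closed by Lemma~\ref{lem:diff-vanish}), so the graded Leibniz rule gives $df_\mathbf{t}=0$, matching the $\rho=0$ half of \eqref{eq:diff-abstract}. When $\rho=1$, the representative is $g_{ij}\cdot m'$ for a closed monomial $m'$ of the above form, and Leibniz produces $df_\mathbf{t}=e_n([\Delta]_{ij}\cdot m')$. Expanding $[\Delta]_{ij}$ via Lemma~\ref{lem:diagonal-class} and noting that the transposition $(i\,j)\in S_n$ identifies $x^{(a)}_iy^{(a)}_j\cdot m'$ with $x^{(a)}_jy^{(a)}_i\cdot m'$ under $e_n$ (since $m'$ does not involve $i$ or $j$) yields the factor of $-2$ and reduces the task to identifying $e_n(x^{(a)}_iy^{(a)}_j\cdot m')$ color by color.

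The main obstacle is this color-by-color identification and the accompanying sign bookkeeping. When $a\in S$, the monomial $x^{(a)}_iy^{(a)}_j\cdot m'$ contains two $x^{(a)}$-singletons at distinct indices $i$ and some $k$ inherited from $m'$, and applying the algebra trick of Remark~\ref{rmk:algebraTrick} with $J=\{i,k\}$ together with $e_2(x^{(a)}_i x^{(a)}_k)=0$ forces the $S_n$-average to vanish; the case $a\in T$ is symmetric. For $a\in[g]\setminus(S\cup T)$, the monomial represents a colored partition of type $(0,S\sqcup\{a\},T\sqcup\{a\},\vec u,\vec v)$, and after reordering its factors into the canonical sequence of \eqref{eq:generators} via graded anticommutativity, $e_n(x^{(a)}_iy^{(a)}_j\cdot m')$ equals $\pm f_{(0,S\sqcup\{a\},T\sqcup\{a\},\vec u,\vec v)}$. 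The sign is determined by counting the transpositions needed to transport $x^{(a)}_i$ into its sorted position among $\{x^{(s)}_\bullet:s\in S\}$ and $y^{(a)}_j$ into its sorted position among $\{y^{(t)}_\bullet:t\in T\}$, producing the exponent appearing in \eqref{eq:diff-abstract}. Summing the surviving contributions completes the verification.
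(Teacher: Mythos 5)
Your proposal is correct and follows essentially the same route as the paper: the paper treats this corollary as immediate from the generation lemma plus the linear independence proposition, with the grading and differential compatibility built into the definitions of \eqref{eq:grading} and \eqref{eq:diff-abstract} via Lemma~\ref{lem:diff-vanish}, Lemma~\ref{lem:diagonal-class}, the vanishing lemma $e_2(\alpha_i\alpha_j)=0$, and Remark~\ref{rmk:algebraTrick}. You have simply written out the intertwining computation that the paper leaves implicit (and your sign bookkeeping only needs to match \eqref{eq:diff-abstract} up to a diagonal change of basis, since only the rank of $d$ is used downstream via hard Lefschetz).
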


\subsection{Concluding the proof using the distinguished basis}\label{sec:mainProof}

Having proven that our distinguished generators form a basis, we will now be able to prove our main theorem as a consequence. In particular, we compute dimensions of the components of the graded vector space $\Q\Gamma_g := \bigoplus_n \Q\Gamma_{g,n}$

Consider the exterior algebra $\Omega=\bigwedge[x_1,y_1,\dots,x_g,y_g]$ on $2g$ degree-one generators. The $i$-th graded part $\Omega^i$ has dimension $\binom{2g}{i}$ and is generated by basis elements $\alpha_{S,T}$ for $S,T\subeq [g], \abs{S}+\abs{T}=i$, where
\begin{equation}
    \alpha_{(S,T)}=\prod_{a\in S} x_a \prod_{b\in T} y_b,
\end{equation}
in which $a$'s and $b$'s are listed in ascending order. Let
\begin{equation}
    \omega=x_1\wedge y_1+\dots+x_g\wedge y_g,
\end{equation}
and consider the operator $\iota:\Omega^i\to \Omega^{i+2}$ defined by $\iota\alpha=\omega\wedge \alpha$. Then $\iota$ has matrix coefficients
\begin{equation}
    \iota\alpha_{(S,T)}=\sum_{\substack{S',T'\\\abs{S'}+\abs{T'}=\abs{S}+\abs{T}+2}} c^{S,T}_{S',T'} \alpha_{(S',T')}
\end{equation}
with the \emph{same} $c^{S,T}_{S',T'}$ as in \eqref{eq:TheC's}. The rank of $\iota$ is straight forward to compute, thanks to the following Lemma.

\begin{lemma}
    The operator $\iota:\Omega^i\to \Omega^{i+2}$ is injective if $i\leq g-1$ and surjective if $i\geq g-1$. 
    \label{lem:hard-lefschetz}
\end{lemma}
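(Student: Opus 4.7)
The plan is to endow $\Omega$ with an $\mathfrak{sl}_2$-action under which $\iota$ is the raising operator, and then extract the lemma from the representation theory of $\mathfrak{sl}_2$. This reduces the statement to the standard hard Lefschetz assertion for the symplectic vector space $V = \Q\pairing{x_1, y_1, \dots, x_g, y_g}$ equipped with $\omega$.

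First I would introduce two companion operators. For each $a \in [g]$, let $\partial_{x_a}, \partial_{y_a}\colon \Omega \to \Omega$ be the odd graded derivations characterized by $\partial_{x_a}(x_b) = \delta_{ab}$, $\partial_{y_a}(y_b) = \delta_{ab}$, and $\partial_{x_a}(y_b) = \partial_{y_a}(x_b) = 0$. Set
\begin{equation*}
L := \iota, \qquad \Lambda := \sum_{a=1}^{g} \partial_{y_a} \partial_{x_a}, \qquad H|_{\Omega^i} := (i - g)\,\mathrm{id}.
\end{equation*}
Then I would verify the $\mathfrak{sl}_2$-relations $[H, L] = 2L$, $[H, \Lambda] = -2\Lambda$, and $[L, \Lambda] = H$. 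The first two are immediate from the degree-shifting properties of $L$ and $\Lambda$. For the third, the cleanest route is to factor $\Omega \cong \bigotimes_{a=1}^{g} \bigwedge[x_a, y_a]$, under which $L, \Lambda, H$ all decompose as sums of operators supported on a single tensor factor and commuting across factors; the relation then reduces to a direct check on each four-dimensional factor $\bigwedge[x_a, y_a]$.

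Once the $\mathfrak{sl}_2$-structure is in place, Weyl's theorem decomposes $\Omega$ as a direct sum of irreducibles $V_n$ (highest weight $n \geq 0$, one-dimensional weight spaces at $\mu \in \{-n, -n+2, \dots, n\}$), on which $L\colon V_n^\mu \to V_n^{\mu+2}$ is an isomorphism for $\mu \in \{-n, \dots, n-2\}$ and the zero map for $\mu = n$. Since $\Omega$ is concentrated in degrees $[0, 2g]$ and $H$ acts on $\Omega^i$ as $i - g$, every irreducible summand satisfies $n \leq g$.

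Finally, for $i \leq g - 1$ (weight $\mu = i - g \leq -1$), every irreducible summand with nonzero weight-$\mu$ part has $n \geq |\mu| \geq 1 > \mu$, so $L$ acts as an isomorphism on each such weight space and is therefore injective on $\Omega^i$. For $i \geq g - 1$ (weight $\mu + 2 \geq 1$), every summand with nonzero weight-$(\mu+2)$ part has $n \geq \mu + 2 \geq 1$, hence $n \geq 1 \geq -\mu$, so the source weight space $V_n^\mu$ is nonzero and surjects via $L$; hence $L$ is surjective onto $\Omega^{i+2}$. The main obstacle is the verification of $[L, \Lambda] = H$: while routine, the sign bookkeeping for odd derivations is the only non-formal step, and the tensor-factorization trick is what keeps that computation short.
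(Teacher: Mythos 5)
Your proof is correct, but it takes a genuinely different route from the paper. The paper disposes of this lemma in one line by citing the Hard Lefschetz Theorem for an abelian variety of dimension $g$, implicitly identifying $\Omega=\bigwedge[x_1,y_1,\dots,x_g,y_g]$ with the cohomology ring of a $g$-dimensional complex torus and $\omega$ with an ample class; injectivity below the middle degree and surjectivity above it are then the standard consequences of the Lefschetz isomorphisms $\omega^{g-i}\wedge\colon \Omega^i\to\Omega^{2g-i}$. You instead give the self-contained algebraic proof: build the $\mathfrak{sl}_2$-triple $(L,\Lambda,H)$ with $\Lambda=\sum_a\partial_{y_a}\partial_{x_a}$ and $H|_{\Omega^i}=(i-g)\,\mathrm{id}$, reduce the relation $[L,\Lambda]=H$ to the four-dimensional factors $\bigwedge[x_a,y_a]$ via the tensor decomposition (where it is a two-line check: $[L,\Lambda]$ acts by $-1,0,0,1$ on $1,x_a,y_a,x_ay_a$), and then read off injectivity/surjectivity from the weight-space structure of finite-dimensional $\mathfrak{sl}_2$-modules. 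The weight bookkeeping in your last paragraph is right: for $i\le g-1$ the source weight $\mu=i-g$ is negative, hence never the highest weight of a summand meeting $\Omega^i$, so $L$ is injective there; for $i\ge g-1$ the target weight $\mu+2\ge 1$ forces $n\ge\mu+2$ and hence $\mu\ge -n$, so every target weight space is hit. What your approach buys is independence from any geometric input — it works verbatim over $\Q$ for the abstract symplectic vector space and makes visible exactly where the operator $\omega\wedge$ gets its Lefschetz property; what it costs is the sign-sensitive verification of $[L,\Lambda]=H$, which the tensor factorization indeed reduces to a trivial computation. (The observation that all summands have $n\le g$ is true but not needed.)
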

\begin{proof}
    This follows from the Hard Lefschetz Theorem for any abelian variety of dimension $g$.
\end{proof}

Returning to $(\Q\Gamma_g,d)$, from the definition \eqref{eq:diff-abstract} it is clear that $d^2=0$ and $d$ is homogeneous of degree $(p,q,w_1,w_2)=(2,-1,0,0)$, so we can define $(p,q,w_1,w_2)$-graded vector spaces
\begin{equation}\label{eq:hgn}
    H_g=\ker(d|\Q\Gamma_g)/\im(d|\Q\Gamma_g),\; H_{g,n}=\ker(d|\Q\Gamma_{g,n})/\im(d|\Q\Gamma_{g,n}).
\end{equation}
We now compute the Hilbert series of $H_{g,n}$ specialized in the way we want.

\begin{proposition}
    We have
    \begin{equation}\label{eq:hilbert-hgn}
        \sum_{p,q,w_1,w_2,n} \dim H_{g,n}^{(p,q,w_1,w_2)} x^{w_1}y^{w_2}(-u)^{p+q}t^n =\frac{\Phi_g\set{(1-xyz^2)(1-xz)^g(1-yz)^g}}{(1-t)(1-x^2 yu^2t^2)^g(1-xy^2u^2t^2)^g},
    \end{equation}
    where $\Phi_g$ is a $\Q[x,y]$-linear map defined by 
    \begin{equation}\label{eq:shiftDef}
    \Phi_g(z^j)=\begin{cases}
    u^j t^j,& 0\leq j\leq g;\\
    u^{j-1}t^j,& g+2\leq j\leq 2g+2.
    \end{cases}
    \end{equation}
\end{proposition}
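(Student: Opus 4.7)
My plan is to exploit the fact that the differential \eqref{eq:diff-abstract} on $\Q\Gamma_g$ leaves the $(\vec u, \vec v)$ coordinates fixed, so the complex decomposes as a direct sum over $(\vec u, \vec v) \in \Z_{\geq 0}^g \times \Z_{\geq 0}^g$ of two-term subcomplexes
\[ C^1[\vec u, \vec v] \xrightarrow{d} C^0[\vec u, \vec v], \]
where $C^\rho[\vec u, \vec v]$ has basis $\set{f_{(\rho, S, T, \vec u, \vec v)}}_{S, T \subseteq [g]}$. Identifying each $C^\rho[\vec u, \vec v]$ with $\Omega = \bigwedge[x_1, y_1, \dots, x_g, y_g]$ via $f_{(\rho, S, T, \vec u, \vec v)} \leftrightarrow \alpha_{(S, T)}$, the coefficients $c^{S, T}_{S', T'}$ in \eqref{eq:TheC's} coincide with the matrix coefficients of the Lefschetz operator $\iota = \omega \wedge$, so $d$ becomes $-2\iota$ under this identification.

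Since $\omega$ has Hodge type $(1, 1)$, Lemma~\ref{lem:hard-lefschetz} refines to the bigraded statement that $\iota: \Omega^{s, t} \to \Omega^{s+1, t+1}$ is injective for $s + t \leq g - 1$ and surjective for $s + t \geq g - 1$, where $\Omega^{s, t}$ is the span of $\alpha_{(S, T)}$ with $\abs{S} = s$ and $\abs{T} = t$. This follows from the Hodge-refined Hard Lefschetz for an abelian variety of dimension $g$: the composition $\omega^{g - (s + t)}: \Omega^{s, t} \to \Omega^{g - t, g - s}$ is an isomorphism for $s + t \leq g$. Consequently, for each $(\vec u, \vec v)$, the cohomology of $C^*[\vec u, \vec v]$ contributes a kernel at $\rho = 1$ of bigraded dimension $\binom{g}{s}\binom{g}{t} - \binom{g}{s+1}\binom{g}{t+1}$ (nonzero only for $s + t \geq g$), together with a cokernel at $\rho = 0$ of bigraded dimension $\binom{g}{s}\binom{g}{t} - \binom{g}{s-1}\binom{g}{t-1}$ (nonzero only for $s + t \leq g$).

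For the assembly, summing the $(\vec u, \vec v)$ contributions across the grading variables produces the denominator $(1 - x^2 y u^2 t^2)^g (1 - x y^2 u^2 t^2)^g$, since each $u_a$ contributes the geometric series $1/(1 - x^2 y u^2 t^2)$ and each $v_a$ contributes $1/(1 - x y^2 u^2 t^2)$. The remaining $(s, t)$-sum splits into a cokernel piece indexed by $s + t \leq g$ and, after reindexing $(s, t) \mapsto (s+1, t+1)$, a kernel piece indexed by $s + t \geq g + 2$; comparing with the expansion
\[ (1 - xyz^2)(1 - xz)^g (1 - yz)^g = \sum_{s, t} (-1)^{s + t} \parens*{\binom{g}{s}\binom{g}{t} - \binom{g}{s-1}\binom{g}{t-1}} x^s y^t z^{s + t}, \]
these are exactly the images under $\Phi_g$ of the terms with $j \leq g$ and $j \geq g + 2$, respectively; the shift $z^j \mapsto u^{j-1} t^j$ for $j \geq g + 2$ in \eqref{eq:shiftDef} encodes the bigrading discrepancy between $\rho = 0$ and $\rho = 1$. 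The factor $1/(1 - t)$ finally arises because $d$ preserves the length $\ell$, making each inclusion $\Q\Gamma_{g, n} \incl \Q\Gamma_g$ a subcomplex, so any cohomology class of length $\ell$ appears in $H_{g, n}$ for all $n \geq \ell$, contributing $\sum_{n \geq \ell} t^n = t^\ell / (1 - t)$. The main technical obstacle lies in the sign-and-shift bookkeeping of this last paragraph: one must verify that the grading offset $-xyut^2$ between the $\rho = 1$ and $\rho = 0$ pieces combines with the sign change between $\binom{g}{s}\binom{g}{t} - \binom{g}{s+1}\binom{g}{t+1}$ and $\binom{g}{s}\binom{g}{t} - \binom{g}{s-1}\binom{g}{t-1}$ to produce exactly the degree shift defining $\Phi_g$ on $z^j$ for $j \geq g + 2$.
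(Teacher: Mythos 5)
Your proposal is correct and follows essentially the same route as the paper: decompose $\Q\Gamma_g$ into two-term complexes indexed by $(\vec u,\vec v)$ and the bidegree $(\abs{S},\abs{T})$, identify the differential with the Lefschetz operator $\omega\wedge$ on $\bigwedge[x_1,y_1,\dots,x_g,y_g]$ via the shared structure constants $c^{S,T}_{S',T'}$, compute kernel/cokernel dimensions from (the bigraded refinement of) hard Lefschetz, and assemble the geometric series. The only piece you defer --- the sign-and-shift bookkeeping matching the two binomial sums to $\Phi_g\set{(1-xyz^2)(1-xz)^g(1-yz)^g}$ --- is exactly what the paper's Lemma~\ref{FinishingTouch} carries out with truncation operators, and it goes through as you anticipate.
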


\begin{proof}

Given $\rho\in \set{0,1}$, $0\leq \sigma,\tau\leq g$, $\vec u,\vec v\in \Z_{\geq 0}^g$, define $V_{(\rho,\sigma,\tau,\vec u,\vec v)}=\mathrm{Span}_\Q\set{f_{(\rho,S,T,\vec u,\vec v)}: \abs{S}=\sigma, \abs{T}=\tau}$. Consider
\begin{equation}
    H_{(1,\sigma,\tau,\vec u,\vec v)}=\ker(d:V_{(1,\sigma,\tau,\vec u,\vec v)}\to V_{(0,\sigma+1,\tau+1,\vec u,\vec v)})
\end{equation}
and
\begin{equation}
    H_{(0,\sigma,\tau,\vec u,\vec v)}=\frac{V_{(0,\sigma,\tau,\vec u,\vec v)}}{\im(d:V_{1,\sigma-1,\tau-1,\vec u,\vec v}\to V_{0,\sigma,\tau,\vec u,\vec v})}.
\end{equation}

Then
\begin{equation}
    H_{g,n}^{(p,q,w_1,w_2)}=\bigoplus_{\rho,\sigma,\tau,\vec u,\vec v}H_{(\rho,\sigma,\tau,\vec u,\vec v)}
\end{equation}
where the sum ranges over all $(\rho,\sigma,\tau,\vec u,\vec v)$ such that \eqref{eq:grading} holds and $2\rho+\sigma+\tau+\abs{\vec u}+\abs{\vec v}\leq n$. 

By Lemma \ref{lem:hard-lefschetz} and the fact that the same structure constants $c^{S,T}_{S',T'}$ appear in $\iota$ and \eqref{eq:TheC's}, we have
\begin{equation}
    \dim H_{(1,\sigma,\tau,\vec u,\vec v)}=
    \begin{cases}
        0, & \sigma+\tau\leq g-1,\\
        \binom{g}{\sigma}\binom{g}{\tau}-\binom{g}{\sigma+1}\binom{g}{\tau+1}, & \sigma+\tau\geq g-1,
    \end{cases}
\end{equation}
and
\begin{equation}
    \dim H_{(0,\sigma,\tau,\vec u,\vec v)}=
    \begin{cases}
        \binom{g}{\sigma}\binom{g}{\tau}-\binom{g}{\sigma-1}\binom{g}{\tau-1}, & \sigma+\tau\leq g+1\\
        0, &\sigma+\tau\geq g+1.
    \end{cases}
\end{equation}

Summing up the contribution to the Hilbert series, and noting that each $(\rho,\sigma,\tau,\vec u, \vec v)$ contributes to $H_{g,n}$ if and only if $2\rho+\sigma+\tau+\abs{\vec u}+\abs{\vec v}\leq n$, we get
\begin{align}
    &\sum_{p,q,w_1,w_2,n} \dim H_{g,n}^{(p,q,w_1,w_2)} x^{w_1}y^{w_2}(-u)^{p+q}t^n\\ 
    &= \sum_{\sigma,\tau,\vec u,\vec v} \dim H_{(1,\sigma,\tau,\abs{\vec u},\abs{\vec v)}} x^{1+\sigma+2\abs{\vec u}+\abs{\vec v}} y^{1+\tau+\abs{\vec u}+2\abs{\vec v}}(-u)^{1+\sigma+\tau+2\abs{\vec u}+2\abs{\vec v}}t^{2+\sigma+\tau+2\abs{\vec u}+2\abs{\vec v}}/(1-t)+\\
    &\sum_{\sigma,\tau,\vec u,\vec v} \dim H_{(0,\sigma,\tau,\vec u,\vec v)} x^{\sigma+2\abs{\vec u}+\abs{\vec v}} y^{\tau+\abs{\vec u}+2\abs{\vec v}}(-u)^{\sigma+\tau+2\abs{\vec u}+2\abs{\vec v}}t^{\sigma+\tau+2\abs{\vec u}+2\abs{\vec v}}/(1-t)\\
    &=\frac{1}{(1-t)(1-x^2 yu^2t^2)^g(1-xy^2u^2t^2)^g}\parens*{-xyut^2 P(-xut,-yut)+Q(-xut,-yut)},
\end{align}
where $$P(x,y):=\sum_{\substack{\sigma,\tau\geq 0\\\sigma+\tau\geq g-1}} x^\sigma y^\tau \parens*{\binom{g}{\sigma}\binom{g}{\tau}-\binom{g}{\sigma+1}\binom{g}{\tau+1}}$$ and $$Q(x,y):=\sum_{\substack{\sigma,\tau\geq 0\\\sigma+\tau\leq g}}x^\sigma y^\tau \parens*{\binom{g}{\sigma}\binom{g}{\tau}-\binom{g}{\sigma-1}\binom{g}{\tau-1}}$$
are the generating functions for $\dim H_{(1,\sigma,\tau,\abs{\vec u},\abs{\vec v})}$ and $\dim H_{(0,\sigma,\tau,\vec u,\vec v)}$, respectively.

The proof is complete once we prove the following lemma.
\end{proof}

\begin{lemma}\label{FinishingTouch}
    Let $P(x,y)$ and $Q(x,y)$ be as in the proof above. Then
    \begin{equation}\label{eq:finish}
        -xyut^2 P(-xut,-yut)+Q(-xut,-yut) = \Phi_g\set{(1-xyz^2)(1-xz)^g(1-yz)^g}
    \end{equation}
    where $\Phi_g$ is as in \eqref{eq:shiftDef} above.
\end{lemma}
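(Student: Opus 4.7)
My plan is to verify \eqref{eq:finish} by expanding both sides as explicit polynomials in $x, y, u, t$ and comparing coefficients. Setting $A_{\sigma,\tau} := \binom{g}{\sigma}\binom{g}{\tau} - \binom{g}{\sigma-1}\binom{g}{\tau-1}$ (with the convention $\binom{g}{-1} = 0$), the first step is to establish the expansion
\[
(1-xyz^2)(1-xz)^g(1-yz)^g = \sum_{\sigma,\tau \geq 0} (-1)^{\sigma+\tau} A_{\sigma,\tau}\, x^\sigma y^\tau z^{\sigma+\tau}
\]
by applying the binomial theorem twice and reindexing the terms coming from the $-xyz^2$ factor. Applying $\Phi_g$ then produces the right-hand side of \eqref{eq:finish} as the sum of two pieces: an ``upper'' piece where $z^{\sigma+\tau}$ is replaced by $u^{\sigma+\tau} t^{\sigma+\tau}$ for $\sigma+\tau \leq g$, and a ``lower'' piece where $z^{\sigma+\tau}$ is replaced by $u^{\sigma+\tau-1} t^{\sigma+\tau}$ for $\sigma+\tau \geq g+2$. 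The middle slice $\sigma+\tau = g+1$ contributes nothing because $A_{\sigma,\tau} = 0$ whenever $\sigma+\tau = g+1$ (the observation underlying Remark \ref{rmk:positiveNum}), so $\Phi_g$ acts unambiguously on the polynomial.

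For the left-hand side, the $Q$-contribution is essentially immediate: substituting directly gives $Q(-xut,-yut) = \sum_{\sigma+\tau \leq g} (-1)^{\sigma+\tau} A_{\sigma,\tau}\, x^\sigma y^\tau (ut)^{\sigma+\tau}$, matching the upper piece on the nose. For the $P$-contribution, I would expand $P(-xut,-yut)$ against its defining formula and then reindex via $\sigma \mapsto \sigma-1,\ \tau \mapsto \tau-1$ so that the prefactor $-xyut^2$ is absorbed and the summand $\binom{g}{\sigma}\binom{g}{\tau} - \binom{g}{\sigma+1}\binom{g}{\tau+1}$ becomes $-A_{\sigma+1,\tau+1}$. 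The sign flips then combine with the external minus sign to produce a summand of the form $(-1)^{\sigma+\tau} A_{\sigma,\tau}\, x^\sigma y^\tau u^{\sigma+\tau-1} t^{\sigma+\tau}$, which is exactly what appears in the lower piece of the right-hand side.

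The main obstacle in this approach is reconciling the two summation ranges. After reindexing, the $P$-sum is indexed by $\sigma,\tau \geq 1$ with $\sigma+\tau \geq g+1$, whereas the lower piece is indexed by $\sigma,\tau \geq 0$ with $\sigma+\tau \geq g+2$. I therefore need to check two boundary cancellations: first, the terms with $\sigma+\tau = g+1$ appearing in the $P$-sum vanish because $A_{\sigma,\tau}=0$ there (again using Remark \ref{rmk:positiveNum}); second, the terms in the lower piece with $\sigma=0$ (and hence $\tau \geq g+2$) or $\tau=0$ (and hence $\sigma \geq g+2$) vanish automatically, because one of the binomial factors in $A_{\sigma,\tau}$ becomes $\binom{g}{k}$ for some $k > g$. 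Once these boundary vanishings are in place, the two sides of \eqref{eq:finish} agree term by term, completing the proof. The whole argument amounts to careful bookkeeping of binomial coefficients, with the single piece of genuine mathematical content being the vanishing at $\sigma+\tau = g+1$, which ultimately traces back to the hard Lefschetz isomorphism invoked in Lemma \ref{lem:hard-lefschetz}.
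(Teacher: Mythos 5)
Your proposal is correct and is essentially the paper's own argument: both proofs reduce to the binomial expansion of $(1-xyz^2)(1-xz)^g(1-yz)^g$ with coefficients $(-1)^{\sigma+\tau}\bigl(\binom{g}{\sigma}\binom{g}{\tau}-\binom{g}{\sigma-1}\binom{g}{\tau-1}\bigr)$, the reindexing that absorbs the prefactor $-xyut^2$ into $P$, and the two boundary vanishings (the slice $\sigma+\tau=g+1$, and the $\sigma=0$ or $\tau=0$ terms in high total degree). The only difference is presentational: the paper packages your index bookkeeping into a total-degree truncation operator $T^{x,y}_J$ and the observation that the substitution $x\mapsto -xut$, $y\mapsto -yut$ preserves total $(x,y)$-degree, whereas you verify the same cancellations coefficient by coefficient.
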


\begin{proof}
    This is ultimately an exercise in expanding the relevant polynomials and performing simple algebraic manipulations. Instead of mechanically showing that the two sides of our desired equality are equal, we approach the problem from a more algebraic perspective. This argument hopefully stresses and elucidates how $\Phi_g$ arises in context. 

    To start, note that in the sums defining $P(x,y)$ and $Q(x,y)$, the constraint $\sigma,\tau\geq 0$ on the summands is unnecessary. This is obvious for $Q(x,y)$, whereas for $P(x,y)$, $\sigma$ being negative, for instance, forces $\tau$ to be larger than $g$, thereby killing the summand because of the $\binom{g}{\tau+1}$ factor.

    Now, we rewrite the binomial partial sums using the truncation operator: for $J\subeq \Z$ and a Laurent series $f(x,y)$, define
    \[ T^{x,y}_J f(x,y):=\sum_{\sigma+\tau\in J} x^\sigma y^\tau [x^\sigma y^\tau] f(x,y), \]
    where $[x^\sigma y^\tau]f(x,y)$ is the $x^\sigma y^\tau$-coefficient of $f(x,y)$. In particular, one can think of $T^{x,y}_Jf(x,y)$ as removing all terms in $f(x,y)$ whose total degree is outside of $J$.
    
    Then
    \begin{align*}
        P(x,y)&= T^{x,y}_{\geq g-1}\sum_{\sigma,\tau\in \Z} x^\sigma y^\tau \parens*{\binom{g}{\sigma}\binom{g}{\tau}-\binom{g}{\sigma+1}\binom{g}{\tau+1}} \\
        &= T^{x,y}_{\geq g-1} \parens*{(1+x)^g (1+y)^g - x^{-1}y^{-1}(1+x)^g (1+y)^g}\\
        &= T^{x,y}_{\geq g-1} \parens*{-x^{-1}y^{-1}(1-xy)(1+x)^g (1+y)^g}\\
        &= -x^{-1}y^{-1} T^{x,y}_{\geq g+1} \parens*{(1-xy)(1+x)^g (1+y)^g},
    \end{align*}
    and
    \begin{align*}
        Q(x,y)&= T^{x,y}_{\leq g}\sum_{\sigma,\tau\in \Z}x^\sigma y^\tau \parens*{\binom{g}{\sigma}\binom{g}{\tau}-\binom{g}{\sigma-1}\binom{g}{\tau-1}}\\
        &= T^{x,y}_{\leq g} \parens*{(1+x)^g (1+y)^g-xy(1+x)^g(1+y)^g}\\
        &=T^{x,y}_{\leq g} \parens*{(1-xy)(1+x)^g (1+y)^g}.
    \end{align*}

    Substituting to the left-hand side of \eqref{eq:finish}, and noting that the substitution $x\mapsto -xut, y\mapsto -yut$ does not affect the total degree in $x,y$, we get
    \begin{align*}
        &-xyut^2 P(-xut,-yut)+Q(-xut,-yut) \\
        &= \parens*{-xyut^2 (-(xut)^{-1}(yut)^{-1}) T^{x,y}_{\geq g+1}+T^{x,y}_{\leq g}} \parens*{(1-xyu^2t^2)(1-xut)^g (1-yut)^g}\\
        &= \parens*{u^{-1} T^{x,y}_{\geq g+1}+T^{x,y}_{\leq g}} \parens*{(1-xyu^2t^2)(1-xut)^g (1-yut)^g},
    \end{align*}
    which clearly agrees with $\Phi_g\set{(1-xyz^2)(1-xz)^g(1-yz)^g}.$
\end{proof}

\begin{proof}[Proof of Theorem~\ref{thm:main}]
    By \cite{huang2020cohomology}, it suffices to prove Theorem~\ref{thm:main} with $r=1$. Assume $r=1$ from now on. Combining \eqref{eq:precise}, the $S_n$-invariant version of \eqref{eq:cohomology} (see the discussion below it), Corollary~\ref{cor:substitution}, and \eqref{eq:hgn}, it follows that the left-hand side of \eqref{eq:hilbert-hgn} is precisely the left-hand side of \eqref{mainThmFormula} with $r=1$. Therefore, Theorem~\ref{thm:main} follows from \eqref{eq:hilbert-hgn}. 
\end{proof}

\section{The Proof of Corollary \ref{cor:stability}}\label{corProof}

In this final section, we expand out the algebra necessary to prove Corollary \ref{cor:stability}.

\begin{proof}[Proof of Corollary \ref{cor:stability}]
    Throughout the proof, we regard $\Z[x,y]$ as the coefficient ring, so polynomials in $x,y$ are treated as constants.
    
    By the definition of $\Phi_g$, there exist sequences $\set{a_i(x,y)}_{i\in \Z}$ and $\set{b_i(x,y)}_{i\in \Z}$  of polynomials in $\Z[x,y]$ such that
    \[ \Phi_g\set{(1-xyz^2)(1-xz)^g(1-yz)^g} = \sum_{i\in \Z} a_i u^i t^i + b_i u^i t^{i+1}.\]
    Note that $a_i=0$ unless $0\leq i\leq g$ and $b_i=0$ unless $g+1\leq i\leq 2g+1$.

    Since $1/((1+xyut)(1-x^2y u^2 t^2)^g(1-xy^2 u^2 t^2)^g)$ is a power series in $\Z[x,y][[ut]]$, there exist sequences $\set{c_i(x,y)}_{i\in \Z}$ and $\set{d_i(x,y)}_{i\in \Z}$ such that
    \[ \frac{1}{(1+xyut)^{r-1}} \frac{\Phi_g\set{(1-xyz^2)(1-xz)^g(1-yz)^g}}{(1-x^2 yu^2t^2)^g(1-xy^2u^2t^2)^g} = \sum_{i\in \Z} c_i u^i t^i + d_i u^i t^{i+1}.\]

    Multiplying by $1/(1-t)=1+t+t^2+\dots$, we conclude that the $u^i t^n$-coefficient of $f_X(x,y,u,t)$ is
    \[ [u^i t^n]f_X(x,y,u,t)=\begin{cases}
        0,&n<i;\\
        c_i(x,y),&n=i;\\
        c_i(x,y)+d_i(x,y),&n\geq i+1,
    \end{cases}\]
    proving the stability \ref{stability}.

    To prove \ref{n=i}, we extract
    \begin{align*}
        \sum_i z^i [u^i t^i]f_X(x,y,u,t) &= \sum_i z^i [u^i t^i]\set*{\frac{\sum_j a_j u^j t^j+b_j u^j t^{j+1}}{(1+xyut)(1-x^2y u^2 t^2)^g(1-xy^2 u^2 t^2)^g}\frac{1}{1-t}}\\
        &=\sum_i z^i [u^i t^i]\set*{\frac{\sum_j a_j u^j t^j}{(1+xyut)(1-x^2y u^2 t^2)^g(1-xy^2 u^2 t^2)^g}}\\
        &=\frac{\sum_i a_i z^i}{(1+xyz)(1-x^2y z^2)^g(1-xy^2 z^2)^g},
    \end{align*}
    where the second equality is because $b_j u^j t^{j+1}$ term cannot contribute and the only term in $1+t+t^2+\dots$ that contributes is the constant term $1$. Finally, we note that $\sum_i a_i z^i=T_{\leq g}\{(1 - xyz^2)(1 - xz)^g(1 - yz)^g\}$. This finishes the proof of \ref{n=i}.

    To prove \ref{n=i+1}, a similar inspection yields
    \begin{align*}
        \sum_i z^i [u^i t^{i+1}]f_X(x,y,u,t) &= \sum_i z^i [u^i t^{i+1}]\set*{\frac{\sum_j a_j u^j t^j+b_j u^j t^{j+1}}{(1+xyut)(1-x^2y u^2 t^2)^g(1-xy^2 u^2 t^2)^g}\frac{1}{1-t}}\\
        &=\sum_i z^i [u^i t^{i+1}]\set*{\frac{(\sum_j a_j u^j t^j)\cdot t+(\sum_j b_j u^j t^{j+1})\cdot 1}{(1+xyut)(1-x^2y u^2 t^2)^g(1-xy^2 u^2 t^2)^g}}\\
        &=\frac{\sum_i a_i z^i+\sum_i b_i z^i}{(1+xyz)(1-x^2y z^2)^g(1-xy^2 z^2)^g}.
    \end{align*}
    Again, we note that $\sum_i a_i z^i+\sum_i b_i z^i=\Psi_g\{(1 - xyz^2)(1 - xz)^g(1 - yz)^g\}$. This finishes the proof of \ref{n=i+1}.
\end{proof}

\bibliography{ref.bib}

\begin{thebibliography}{10}

\bibitem{an2019subdivisional}
Byung~Hee An, Gabriel~C Drummond-Cole, and Ben Knudsen.
\newblock Subdivisional spaces and graph braid groups.
\newblock {\em Documenta Mathematica}, 24:1513--1583, 2019.

\bibitem{bianchi2022mapping}
Andrea Bianchi, Jeremy Miller, and Jennifer Wilson.
\newblock Mapping class group actions on configuration spaces and the {J}ohnson filtration.
\newblock {\em Transactions of the American Mathematical Society}, 375(08):5461--5489, 2022.

\bibitem{Ch}
Jan Cheah.
\newblock The cohomology of smooth nested {H}ilbert schemes of points.
\newblock {\em PhD Thesis, the University of Chicago}, 1994.

\bibitem{cheonghuang2022betti}
Gilyoung Cheong and Yifeng Huang.
\newblock {B}etti and {H}odge numbers of configuration spaces of a punctured elliptic curve from its zeta functions.
\newblock {\em Trans. Amer. Math. Soc.}, 375(9):6363--6383, 2022.

\bibitem{church2012homological}
Thomas Church.
\newblock Homological stability for configuration spaces of manifolds.
\newblock {\em Inventiones mathematicae}, 188:465--504, 2012.

\bibitem{church2015fi}
Thomas Church, Jordan~S Ellenberg, and Benson Farb.
\newblock {FI}-modules and stability for representations of symmetric groups.
\newblock {\em Duke Mathematical Journal}, 164(9):1833--1910, 2015.

\bibitem{church2013representation}
Thomas Church and Benson Farb.
\newblock Representation theory and homological stability.
\newblock {\em Advances in Mathematics}, 245:250--314, 2013.

\bibitem{drummondcoleknudsen}
Gabriel~C. Drummond-Cole and Ben Knudsen.
\newblock Betti numbers of configuration spaces of surfaces.
\newblock {\em J. Lond. Math. Soc. (2)}, 96(2):367--393, 2017.

\bibitem{elia2024techniques}
Sophia Elia, Donghyun Kim, and Mariel Supina.
\newblock Techniques in equivariant {E}hrhart theory.
\newblock {\em Annals of Combinatorics}, 28(3):819--870, 2024.

\bibitem{Gal}
\'{S}wiatos{\l}aw~R. Gal.
\newblock Euler characteristic of the configuration space of a complex.
\newblock {\em Colloq. Math.}, 89(1):61--67, 2001.

\bibitem{huang2020cohomology}
Yifeng Huang.
\newblock Cohomology of configuration spaces on punctured varieties.
\newblock Preprint \url{https://arxiv.org/abs/2011.07153}, 2020.

\bibitem{knudsen2024robertson}
Ben Knudsen and Eric Ramos.
\newblock Robertson’s conjecture and universal finite generation in the homology of graph braid groups.
\newblock {\em Selecta Mathematica}, 30(5):82, 2024.

\bibitem{lutgehetmann2017representation}
Daniel L{\"u}tgehetmann.
\newblock Representation stability for configuration spaces of graphs.
\newblock {\em arXiv preprint arXiv:1701.03490}, 2017.

\bibitem{macdonald1962poincare}
Ian~G Macdonald.
\newblock The {P}oincar{\'e} polynomial of a symmetric product.
\newblock In {\em Mathematical Proceedings of the Cambridge Philosophical Society}, volume~58, pages 563--568. Cambridge University Press, 1962.

\bibitem{mcduff1975configuration}
Dusa McDuff.
\newblock Configuration spaces of positive and negative particles.
\newblock {\em Topology}, 14(1):91--107, 1975.

\bibitem{miller2019higher}
Jeremy Miller and Jennifer Wilson.
\newblock Higher-order representation stability and ordered configuration spaces of manifolds.
\newblock {\em Geometry \& Topology}, 23(5):2519--2591, 2019.

\bibitem{miyata2023graph}
Dane Miyata and Eric Ramos.
\newblock The graph minor theorem in topological combinatorics.
\newblock {\em Advances in Mathematics}, 430:109203, 2023.

\bibitem{nagel2021rationality}
Uwe Nagel.
\newblock Rationality of equivariant {H}ilbert series and asymptotic properties.
\newblock {\em Transactions of the American Mathematical Society}, 374(10):7313--7357, 2021.

\bibitem{pagaria2020cohomology}
Roberto Pagaria.
\newblock The cohomology rings of the unordered configuration spaces of elliptic curves.
\newblock {\em Algebraic \& Geometric Topology}, 20(6):2995--3012, 2020.

\bibitem{ramos2018stability}
Eric Ramos.
\newblock Stability phenomena in the homology of tree braid groups.
\newblock {\em Algebraic \& Geometric Topology}, 18(4):2305--2337, 2018.

\bibitem{ramos2020application}
Eric Ramos.
\newblock An application of the theory of {FI}-algebras to graph configuration spaces.
\newblock {\em Mathematische Zeitschrift}, 294(1):1--15, 2020.

\bibitem{ramos2021hilbert}
Eric Ramos.
\newblock Hilbert series in the category of trees with contractions.
\newblock {\em Mathematische Zeitschrift}, 298:1831--1852, 2021.

\bibitem{segal1973configuration}
Graeme Segal.
\newblock Configuration-spaces and iterated loop-spaces.
\newblock {\em Inventiones mathematicae}, 21:213--221, 1973.

\bibitem{stanley1990enumerative}
Richard~P Stanley.
\newblock Enumerative combinatorics {II}.
\newblock {\em Cambridge Studies in Advanced Mathematics}, 62, 1990.

\bibitem{stapledon2011equivariant}
Alan Stapledon.
\newblock Equivariant {E}hrhart theory.
\newblock {\em Advances in Mathematics}, 226(4):3622--3654, 2011.

\bibitem{totaro1996configuration}
Burt Totaro.
\newblock Configuration spaces of algebraic varieties.
\newblock {\em Topology}, 35(4):1057--1067, 1996.

\bibitem{vakilwood2015discriminants}
Ravi Vakil and Melanie~Matchett Wood.
\newblock Discriminants in the {G}rothendieck ring.
\newblock {\em Duke Math. J.}, 164(6):1139--1185, 2015.

\end{thebibliography}
\bibliographystyle{plain}
\end{document}